\documentclass[12pt, reqno]{amsproc}
\usepackage{graphicx, enumerate}
\usepackage[margin=1in]{geometry}
\usepackage{amsmath}
\usepackage{amsfonts}
\usepackage{amssymb}
\usepackage{amsfonts,amsmath,amssymb}
\usepackage{amsthm}
\newtheorem{lemma}{Lemma}
\newtheorem{theorem}{Theorem}

\newtheorem{eg}{Example}
\title[Infinite Order of Growth of Solutions of Second Order Linear Differential Equations]{Infinite Order of Growth of Solutions of Second Order Linear Differential Equations}
\author[Naveen Mehra and V. P. Pande]{Naveen Mehra and V. P. Pande}
\address{Naveen Mehra, Department of Mathematics, Kumaun University, S.S.J Campus, Almora-263601, Uttarakhand, India}
\email{naveenmehra00@gmail.com}
\address{V. P. Pande, Department of Mathematics, Kumaun University, S.S.J Campus, Almora-263601, Uttarakhand, India}
\email{vijpande@gmail.com}
\subjclass[2010]{34M10, 30D35}
\keywords{entire function, order of growth, complex differential equation, Fabry gap and Fe$\acute{j}$er gap.}
\thanks{The research work of the first author is supported by research fellowship from Council of Scientific and Industrial Research (CSIR), New Delhi.} 
\begin{document}
\maketitle
\begin{abstract}
Considering differential equation $f''+A(z)f'+B(z)f=0$, where $A(z)$ and $B(z)$ are entire complex functions, our results revolve around proving all non-trivial solutions are of infinite order taking various restrictions on coefficients $A(z)$ and $B(z)$.   
\end{abstract}

\section{Introduction and statement of main results}
We consider the second order homogeneous linear differential equation,
\begin{equation}\label{cde1}
f''+A(z)f'+B(z)f=0
\end{equation}
where $A(z)$ and $B(z)$ are entire functions. It is well known result that all solutions of \eqref{cde1} are entire functions(see \cite{herold}). All solutions are of finite order if and only if $A(z)$ and $B(z)$ are polynomial(see \cite{herold}). Thus, if at least one of the coefficients are not polynomial then solutions can be of infinite order. It is wide area of research to find conditions on $A(z)$ and $B(z)$ such that solutions are of infinite order. There are many results concerning this problem. Following theorem is the collection of such basic results which provide all non-trivial solutions of infinite order.
 \begin{theorem}
 Suppose $A(z)$ and $B(z)$ are non-constant entire functions, then all non-trivial solutions of \eqref{cde1} are of infinite order, if one of the following holds:
\begin{enumerate}[(i)]
\item\cite{gundersen2} $\rho(A) <\rho(B)$
\item\cite{gundersen2} $A(z)$ is a polynomial and $B(z)$ is transcendental
\item\cite{heller} $\rho(B)<\rho(A)\leq\frac{1}{2}$.
\end{enumerate}
\end{theorem}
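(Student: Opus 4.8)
The plan is to argue by contradiction in each case: assume a non-trivial solution $f$ of \eqref{cde1} has finite order $\rho(f)<\infty$, and derive a contradiction from the hypotheses on the coefficients. Cases (i) and (ii) can be treated together by a Nevanlinna-theoretic comparison of growth, while case (iii) requires pointwise estimates together with a minimum-modulus ($\cos\pi\rho$) theorem, and this is where the real work lies.

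For (i) and (ii) I would start from \eqref{cde1} rewritten as $B=-f''/f-A\,(f'/f)$. Since $B$ is entire, $T(r,B)=m(r,B)$, and by the lemma on the logarithmic derivative $m(r,f^{(k)}/f)=O(\log^{+}T(r,f)+\log r)$ as $r\to\infty$ outside a set $E$ of finite linear measure. Combining these gives
$$T(r,B)\le T(r,A)+O(\log r+\log T(r,f)),\qquad r\notin E.$$
If $\rho(f)<\infty$ then $\log T(r,f)=O(\log r)$, so $T(r,B)\le T(r,A)+O(\log r)$ off $E$; since a set of finite measure does not affect the order, this yields $\rho(B)\le\rho(A)$, contradicting (i). For (ii), $A$ polynomial gives $T(r,A)=O(\log r)$, whence $T(r,B)=O(\log r)$, forcing $B$ to be a polynomial and contradicting its transcendence.

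For (iii), the extra hypothesis $\rho(A)\le\tfrac12$ is exploited through a minimum-modulus theorem: for an entire function of order $\rho<\tfrac12$ there is a set of radii of positive lower logarithmic density on which $\log\min_{|z|=r}|A(z)|\ge(\cos\pi\rho-\varepsilon)\log M(r,A)$, so that $|A|$ is uniformly large on these circles. I would combine this with Gundersen's pointwise estimate $|f^{(k)}(z)/f(z)|\le|z|^{k(\rho(f)-1+\varepsilon)}$, valid for $|z|=r$ outside an exceptional set of finite measure, and with the crude bound $|B(z)|\le\exp\!\big(r^{\rho(B)+\varepsilon}\big)$. Writing \eqref{cde1} as $A\,(f'/f)=-(f''/f)-B$ and using $\rho(B)<\rho(A)$ so that $B$ is dominated by $\min|A|$, I would deduce that $|f'/f|$ is forced to be extremely small, uniformly, on the good circles.

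The concluding step, and the main obstacle, is to turn ``$f'/f$ small on a family of circles of positive density'' into a genuine contradiction while simultaneously arranging that these circles avoid the finite-measure exceptional sets coming from Gundersen's estimate — a density-versus-measure bookkeeping that must be done with care. Once that is in place, the smallness of $f'/f$ bounds the zero-counting integral $\frac{1}{2\pi i}\oint_{|z|=r}(f'/f)\,dz$, showing that $f$ has essentially no zeros, so $f=P\,e^{Q}$ with $P,Q$ polynomials; but then $f'/f=P'/P+Q'$ cannot be exponentially small on large circles unless $Q$ is constant, which reduces $f$ to a polynomial. Substituting a polynomial solution back into \eqref{cde1} forces $\rho(B)\ge\rho(A)$ (or $B\equiv0$), contradicting the hypothesis and completing the proof.
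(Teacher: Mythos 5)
This statement is Theorem~1 of the paper, which the authors present purely as a survey of known results with citations to Gundersen and to Hellerstein--Miles--Rossi; the paper contains no proof of it, so your attempt can only be judged on its own merits. Your treatment of (i) and (ii) is correct and is the standard argument: writing $B=-f''/f-A\,(f'/f)$, using $T(r,B)=m(r,B)$ and the logarithmic derivative lemma (for $\rho(f)<\infty$ one even has $m(r,f^{(k)}/f)=O(\log r)$ with no exceptional set), one gets $\rho(B)\leq\rho(A)$ in case (i) and $T(r,B)=O(\log r)$ in case (ii), each a contradiction.

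Part (iii) is where there is a genuine gap, in two respects. First, your entire mechanism rests on the $\cos\pi\rho$ minimum--modulus theorem, which asserts $\log m(r,A)\geq(\cos\pi\rho-\varepsilon)\log M(r,A)$ on a set of positive lower logarithmic density only for $\rho<\tfrac12$; at $\rho(A)=\tfrac12$ the constant $\cos\pi\rho$ vanishes and the theorem says nothing (indeed, functions of order exactly $\tfrac12$ can have bounded minimum modulus, e.g.\ $\cos\sqrt{z}$ on the positive axis). The hypothesis of (iii) is $\rho(B)<\rho(A)\leq\tfrac12$ \emph{inclusive}, and the boundary case $\rho(A)=\tfrac12$ is precisely what makes the Hellerstein--Miles--Rossi theorem a substantial result requiring different tools (their argument uses, among other things, the Fuchs estimate on $r\int_J|f'/f|\,d\theta$ that appears as Lemma~\ref{fuchs} in this paper, rather than a pointwise minimum--modulus bound). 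Second, even in the subcase $\rho(A)<\tfrac12$, the step you yourself flag as ``the main obstacle'' is not carried out: to bound the zero--counting integral $\frac{1}{2\pi i}\oint_{|z|=r}(f'/f)\,dz$ you need $|f'/f|=o(1/r)$ uniformly on whole circles, which requires $\log m(r,A)$ to dominate $r^{\rho(B)+\varepsilon}$ \emph{on the same radii} where the density set lives; since $\log M(r,A)\geq r^{\rho(A)-\varepsilon}$ is only guaranteed on some unbounded sequence, an extra argument (exploiting monotonicity of $\log M(r,A)$ over intervals $[r_0,Kr_0]$ and intersecting with the positive--density set) is needed and is missing. As written, (iii) is an outline of a plausible strategy for $\rho(A)<\tfrac12$ and no proof at all for $\rho(A)=\tfrac12$.
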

 Consider the equation
\begin{equation}\label{cde2}
w''+P(z)w=0
\end{equation}
where $P(z)$ is a polynomial of degree $n$. There are several papers in  which the authors have considered $A(z)$ to be a solution of equation \eqref{cde2} and $B(z)$ to satisfy different conditions so that all  non-trivial solutions are of infinite order. The next theorem is the collection of all these results.
\begin{theorem}
Suppose $A(z)$ is a solution of \eqref{cde2} and $B(z)$ is a transcendental entire  function satisfying one of the conditions mentioned below. Then all non-trivial solutions of \eqref{cde2} are of infinite order.
\begin{enumerate}[(i)]
\item\cite{wu&wu} $\rho(B)<\frac{1}{2}$
\item\cite{long&qiu} $\mu (B) < \frac{1}{2}$ and $\rho(A)\neq\rho(B)$
\item\cite{wlh} $\mu (B) < \frac{1}{2} + \frac{1}{2(n+1)}$ and $\rho(A)\neq\rho(B)$
\item\cite{long} $B(z)$ has Fabry gap and $\rho(A)\neq\rho(B)$.
\end{enumerate}
\end{theorem}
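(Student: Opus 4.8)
The plan is to treat all four parts by a single scheme, letting the hypothesis on $B$ enter only through a min-modulus input. The structural backbone, which I would record first as a lemma, is the growth of the coefficient $A$: since $A$ solves \eqref{cde2} with $\deg P = n$, one has $\rho(A) = (n+2)/2$, and the plane is divided by $n+2$ critical rays $\arg z = \theta_j$ into $n+2$ congruent sectors; in the interior of each sector $A$ is either exponentially dominant, with $\log|A(re^{i\theta})| \sim c\,r^{(n+2)/2}$, or exponentially recessive, while along the critical rays themselves $A$ grows at most polynomially.

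Suppose, for contradiction, that \eqref{cde1} admits a non-trivial solution $f$ with $\rho(f) = \rho < \infty$. Solving the equation for $B$ gives
\begin{equation*}
-B(z) = \frac{f''(z)}{f(z)} + A(z)\,\frac{f'(z)}{f(z)}.
\end{equation*}
By Gundersen's logarithmic-derivative estimate there is a set $E \subset [1,\infty)$ of finite linear measure and a constant $c>0$ such that $|f^{(k)}(z)/f(z)| \le |z|^{c}$ for $k=1,2$ whenever $|z| = r \notin E$. Hence, at any point lying on a critical ray of $A$ with $r \notin E$, the polynomial bound on $A$ yields $|B(z)| \le (1+|A(z)|)\,|z|^{c} \le r^{c'}$: along the controlled directions $B$ is forced to be of at most polynomial growth.

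The contradiction then comes from showing that the hypothesis on $B$ makes $|B|$ large precisely on such controlled points, along a sequence $r_m \to \infty$ that can be taken outside $E$ (possible since $E$ has finite measure while the good radii below carry positive density). For (i) this is the classical $\cos\pi\rho$ theorem: as $\rho(B)<\tfrac12$ one obtains $r_m$ along which $\log m(r_m,B) \ge (\cos\pi\rho(B)-o(1))\log M(r_m,B) \to \infty$ on the \emph{whole} circle, in particular where it meets a critical ray, contradicting the bound $r_m^{c'}$. For (ii) and (iii) I would invoke the lower-order analogue (Besicovitch--Barry type) of this theorem, whose threshold is calibrated to $\rho(A)=(n+2)/2$; the sharper bound $\mu(B) < \tfrac12 + \tfrac1{2(n+1)} = \tfrac{n+2}{2(n+1)}$ in (iii) is exactly what makes the set of circles carrying a large minimum modulus dense enough to meet the controlled directions of $A$. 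For (iv), Fabry's gap theorem supplies the regularity $\log|B(re^{i\theta})| = (1+o(1))\log M(r,B)$ uniformly in $\theta$ outside a small set, so $B$ is large in every direction and again meets the critical rays.

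The hypothesis $\rho(A)\neq\rho(B)$ in (ii)--(iv) splits the argument cleanly: if $\rho(B) > \rho(A)$ the conclusion is immediate from Theorem 1(i), so one may assume $\rho(B) < \rho(A) = (n+2)/2$, which keeps $B$ genuinely subordinate to $A$ in the dominant sectors and prevents the comparison above from being obstructed. I expect the main obstacle to be the refined balancing in part (iii): one must quantify how the density of the circles on which $m(r,B)$ is large (governed by $\mu(B)$) interacts with the angular measure of the sectors where $A$ is dominant (governed by $n$), and verify that the threshold $\tfrac{n+2}{2(n+1)}$ leaves room to force a point of large $|B|$ onto a controlled direction. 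Pinning down the critical-ray/sector dichotomy for $A$ with explicit constants is the other technical point, but it is classical for solutions of \eqref{cde2}.
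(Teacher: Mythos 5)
The paper does not actually prove this statement: Theorem~2 is presented as a survey of the four cited results, and the closest in-paper argument is the proof of Theorem~3, which has the same hypothesis on $A$ (a non-trivial solution of \eqref{cde2}) and follows the scheme of those cited papers. Measured against that scheme, your proposal has a genuine gap, located exactly where you place all the structural weight: the claim that $A$ ``grows at most polynomially'' along the critical rays $\arg z=\theta_j$ is false in general. The Liouville--Green asymptotics give $A\sim cP^{-1/4}\exp\bigl(\pm\int\sqrt{-P}\bigr)$, and on a critical ray only the \emph{leading} term $\tfrac{2\sqrt{-a_n}}{n+2}z^{(n+2)/2}$ of the exponent is purely imaginary; the subleading terms can contribute a real part of order $r^{n/2}$. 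Concretely, $w(z)=e^{z^2/2+z}$ solves $w''-\bigl((z+1)^2+1\bigr)w=0$, an instance of \eqref{cde2} with $n=2$, and on the critical ray $\arg z=\pi/4$ one has $|w(re^{i\pi/4})|=e^{r/\sqrt2}$. So the ``controlled directions'' on which you intend to trap a large value of $|B|$ need not exist, and the chain $|B(z)|\le(1+|A(z)|)|z|^{c}\le r^{c'}$ collapses.

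The correct dichotomy, used both in the cited papers and in the paper's proof of Theorem~3 (via Lemma~\ref{hille}), is sectorial rather than radial: in each of the $n+2$ sectors $S_j$ the coefficient $A$ either blows up or decays to zero exponentially. If $A$ decays in some $S_j$, your argument can be repaired by trapping the large minimum modulus of $B$ (from the $\cos\pi\rho$ theorem, its Besicovitch--Barry lower-order analogues, or the Fabry gap estimate --- the inputs you list for (i)--(iv) are the right ones) at a point of that sector where $|A|<1$ and Gundersen's estimate holds. But your single scheme has no mechanism for the remaining case, which genuinely occurs (e.g.\ $A=\cosh z$ blows up in every sector): there $|A|$ is exponentially large off the critical rays and uncontrolled on them, so no direction is ``controlled.'' That case requires an entirely different idea --- Lemma~\ref{gundersen2} shows $f\to b_j\neq 0$ in each shrunken sector, whence $f$ is bounded by the Phragm\'en--Lindel\"of principle and constant by Liouville's theorem, a contradiction. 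This case split is the essential content missing from your proposal; as written, the argument fails whenever $A$ has no decaying sector, and even when it does, the largeness of $|B|$ must be localized in that sector rather than on a critical ray.
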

 Motivated by above results we have replaced the conditions on $B(z)$ to be transcendental entire function satisfying $T(r,B)\sim \log M(r,B)$ in a set $E$ of positive upper logarithmic density, where the notation $T(r,B)\sim \log M(r,B)$ means $$\lim\limits_{r\to\infty}\frac{T(r,B)}{\log M(r,B)}=1.$$
\begin{theorem}
Let $A(z)$ be a non-trivial solution of \eqref{cde2}, where $P(z)=a_nz^n+\cdots +a_0$, $a_n\neq 0$ and $B(z)$ be transcendental entire function satisfying $T(r,B)\sim \log M(r,B)$ in a set $E$ of positive upper logarithmic density, then all non-trivial solutions of \eqref{cde1} are of infinite order.
\end{theorem}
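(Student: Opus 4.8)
The plan is to argue by contradiction. Suppose $f\not\equiv 0$ solves \eqref{cde1} with $\rho(f)=\rho<\infty$. Rewriting the equation gives $B=-\frac{f''}{f}-A\,\frac{f'}{f}$, so that
$$|B(z)|\le\left|\frac{f''(z)}{f(z)}\right|+|A(z)|\left|\frac{f'(z)}{f(z)}\right|.$$
First I would invoke Gundersen's estimates for logarithmic derivatives: since $\rho(f)<\infty$, for every $\varepsilon>0$ there is a set $E_1\subset(1,\infty)$ of finite logarithmic measure such that $\bigl|f^{(k)}(z)/f(z)\bigr|\le r^{k(\rho-1)+\varepsilon}$ for $k=1,2$ whenever $|z|=r\notin E_1$. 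Hence for such $z$ one has $|B(z)|\le r^{2(\rho-1)+\varepsilon}+|A(z)|\,r^{(\rho-1)+\varepsilon}$, reducing the problem to producing, on suitable circles, a direction where $|A|$ is small while $|B|$ is large.

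Next I would record the growth of $A$. Being a non-trivial solution of \eqref{cde2} with $\deg P=n$, $A$ is entire of order $\rho(A)=\frac{n+2}{2}=:\sigma$, and by the classical asymptotic-integration (WKB) theory the plane splits into $n+2$ sectors of opening $\frac{2\pi}{n+2}$ bounded by critical rays; in each open sector $A$ is either dominant, $\log|A(re^{i\theta})|\sim c\,r^{\sigma}$ with $c>0$, or recessive, $\log|A(re^{i\theta})|\sim -c\,r^{\sigma}$, while along the $n+2$ anti-Stokes rays $A$ is oscillatory with only algebraic growth, $|A(re^{i\theta_0})|\le M\,r^{m}$. If $\rho(B)>\sigma$, evaluating the displayed bound at a point where $|B|=M(r,B)$ for $r\notin E_1$ already gives $M(r,B)\le r^{2(\rho-1)+\varepsilon}+M(r,A)\,r^{(\rho-1)+\varepsilon}=e^{O(r^{\sigma})}$, contradicting $M(r,B)=e^{r^{\rho(B)+o(1)}}$; this case is essentially Theorem 1(i). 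So the substance is the case $\rho(B)\le\sigma$, where $A$ may be exponentially large in almost every direction.

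Here the hypothesis on $B$ enters. From $T(r,B)\sim\log M(r,B)$ on $E$ and the elementary bound $\log^+|B(re^{i\theta})|\le\log M(r,B)$ I would deduce, by estimating $2\pi T(r,B)=\int_0^{2\pi}\log^+|B(re^{i\theta})|\,d\theta$, that for $r\in E$ the inequality $\log|B(re^{i\theta})|\ge(1-\varepsilon)\log M(r,B)$ holds for all $\theta$ outside an exceptional arc-set whose angular measure tends to $0$. Since $E$ has positive upper logarithmic density it supplies arbitrarily large radii, and the plan is to select, for infinitely many $r\in E\setminus E_1$, one of the $n+2$ anti-Stokes directions $\theta_0$ lying outside this shrinking exceptional arc-set (so that $|B(re^{i\theta_0})|\ge M(r,B)^{1-\varepsilon}$) while $|A(re^{i\theta_0})|\le M\,r^{m}$. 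Feeding this into the displayed bound yields $M(r,B)^{1-\varepsilon}\le r^{2(\rho-1)+\varepsilon}+M\,r^{m+(\rho-1)+\varepsilon}$, a transcendentally large quantity bounded by a polynomial, which is the desired contradiction; hence $\rho(f)=\infty$.

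The hard part is precisely this last selection: the directions in which $A$ fails to be exponentially large form only a finite set of anti-Stokes rays (for a generic solution $A$ the indicator is nonnegative and vanishes only there), a set of angular measure zero, whereas the regularity hypothesis controls $B$ only up to an exceptional arc-set of small, but a priori nonzero, measure that could in principle cluster around those rays. Making the matching rigorous is the crux: one must upgrade the statement ``$|B|$ is near-maximal on most of the circle'' to ``$|B|$ is large at an anti-Stokes direction'' for infinitely many $r\in E$, which is where the subharmonicity of $\log|B|$ together with the control on the zeros of $B$ afforded by $T(r,B)\sim\log M(r,B)$ must be exploited. The order-comparison cases $\rho(B)\ne\sigma$ are comparatively routine; the equality case $\rho(B)=\sigma$, untreated by the conditions ``$\rho(A)\ne\rho(B)$'' of the earlier theorems, is exactly what this matching is designed to settle.
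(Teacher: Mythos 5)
Your overall frame---contradiction, rewriting $B=-f''/f-A\,f'/f$, Gundersen's logarithmic derivative estimates, and disposing of $\rho(B)>\rho(A)$ via Gundersen's theorem---agrees with the paper, but the step you yourself single out as ``the crux'' is a genuine gap, and the paper's proof shows that the anti-Stokes-ray matching is the wrong place to look. The tool you are missing is Hille's dichotomy (the paper's Lemma 6) for solutions of $w''+P(z)w=0$: in each of the $n+2$ sectors $S_j$ of opening $\frac{2\pi}{n+2}$, the solution $A$ either blows up or \emph{decays to zero} exponentially. If $A$ decays in at least one sector $S_j$, then $|A(re^{i\theta})|\le\exp(-r^{(n+2)/2-\xi})$ on a whole \emph{open sector of positive angular measure}, and since the hypothesis $T(r,B)\sim\log M(r,B)$ on $E$ forces the exceptional set $I_r=\{\theta:\log|B(re^{i\theta})|\le(1-c)\log M(r,B)\}$ to have angular measure (at worst) tending to zero, for all large $r\in E$ one can choose $\theta\in S_j\setminus I_r$ while also avoiding the measure-zero Gundersen set; then $M(r,B)^{1-2c}\le(1+o(1))r^{2\rho(f)}$ contradicts the transcendence of $B$. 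Your worry that the small exceptional arcs for $B$ might cluster around the finitely many rays where $A$ is not exponentially large simply does not arise, because one works inside a fat sector rather than on a ray. (Note also that along a critical ray bordering two blow-up sectors the zeros of $A$ accumulate, so even the pointwise bound $|A(re^{i\theta_0})|\le Mr^m$ you want there is delicate.)

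The more serious omission is the complementary case in which $A$ blows up in \emph{every} sector, where there is no recessive direction at all and your plan has nothing to fall back on except the unproved matching. The paper handles this by an entirely different mechanism: with $\rho(B)<\rho(A)=\frac{n+2}{2}$ one has $|A(z)|\ge\exp\{(1+o(1))\alpha|z|^{(n+2)/2-\beta}\}$ and $|B(z)|\le\exp\{o(1)|z|^{(n+2)/2-\beta}\}$ in each sector, so Gundersen's asymptotic lemma (the paper's Lemma 7) gives $f(z)\to b_j\neq 0$ in each shrunken sector; $f$ is then bounded in the plane by Phragm\'en--Lindel\"of and constant by Liouville, a contradiction. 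Without this case your argument is incomplete precisely where you located the difficulty. (For completeness: both your sketch and the paper leave the boundary situation $\rho(B)=\rho(A)$ with $A$ blowing up in every sector essentially untreated, the paper by passing silently from ``$\rho(A)<\rho(B)$'' to ``suppose $\rho(A)>\rho(B)$''.)
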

Kwon in his paper \cite{kwon} considered $\rho(A)>1$ of finite non-integral order with all its zeros fixed in a sector and $0<\rho(B)<1/2$. Kumar et. al. \cite{myself} assumed that $B(z)$ has Fabry gap. Recall that, for an entire function $\displaystyle{f(z)=\sum\limits_{n=0}^{\infty} a_{\lambda_n}z^{\lambda_n}}$ if $\displaystyle{\sum\limits_{n=0}^{\infty}\frac{1}{\lambda_n}}$ diverges to $\infty$ we say that it has Fabry gap.
\begin{theorem}
Suppose that $A(z)$ is an entire function of finite non-integral order with $\rho(A)>1$, and that all the zeros of $A(z)$ lies in the angular sector  $\theta_1<argz<\theta_2$ satisfying
$$\theta_2-\theta_1<\frac{\pi}{p+1}$$ if $p$ is odd, and
$$\theta_2-\theta_1<\frac{(2p-1)\pi}{2p(p+1)}$$ if $p$ is even, where $p$ is the genus of $A(z)$. Let $B(z)$ be an entire function satisfying the conditions mentioned below. Then all non-trivial solutions $f$ of equation \eqref{cde1} are of infinite order.
\begin{enumerate}[(i)]
\item\cite{kwon} $0<\rho(B)<\frac{1}{2}$,
\item\cite{myself} $B(z)$ has Fabry gap.
\end{enumerate}
\end{theorem}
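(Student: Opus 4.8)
The plan is to argue by contradiction: suppose $f\not\equiv 0$ solves \eqref{cde1} and has finite order $\rho(f)=\sigma<\infty$. Rewriting \eqref{cde1} as
$$-B(z)=\frac{f''(z)}{f(z)}+A(z)\,\frac{f'(z)}{f(z)},$$
the aim is to exhibit a ray $\arg z=\theta_0$ along which the right-hand side grows at most polynomially in $r=|z|$, while $|B|$ on the left is forced to grow faster than any power of $r$; this is the desired contradiction. The first ingredient is Gundersen's logarithmic-derivative estimate: for every $\varepsilon>0$ there is a set $E_1\subset(0,\infty)$ of finite logarithmic measure such that
$$\left|\frac{f^{(k)}(z)}{f(z)}\right|\le r^{\,k(\sigma-1+\varepsilon)},\qquad k=1,2,$$
for all $z$ with $|z|=r\notin E_1$. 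Thus on the relevant circles both logarithmic derivatives are bounded by a fixed power $r^{K}$ of $r$.

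Second, and this is where the hypotheses on $A$ enter, I would establish a critical ray $\arg z=\theta_0$ (necessarily outside the zero-sector) along which $|A(re^{i\theta_0})|\to 0$ as $r\to\infty$. The tool is the Phragm\'en--Lindel\"of indicator $h_A(\theta)=\limsup_{r\to\infty} r^{-\rho(A)}\log|A(re^{i\theta})|$. Since $A$ has finite non-integral order and genus $p$, with all zeros confined to the narrow sector $\theta_1<\arg z<\theta_2$, its canonical-product representation allows one to compute $h_A$ explicitly; the width conditions $\theta_2-\theta_1<\pi/(p+1)$ (for $p$ odd) and $\theta_2-\theta_1<(2p-1)\pi/(2p(p+1))$ (for $p$ even) are exactly what force $h_A(\theta_0)<0$ in some direction $\theta_0$. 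Strict negativity yields $\log|A(re^{i\theta_0})|\le -c\,r^{\rho(A)-\varepsilon}$ for large $r$, so $|A(z)|\to 0$ along this ray and hence $|A(z)|\,|f'/f|=o(r^{K})$ there. I expect this lemma---the sharp indicator computation turning the genus-dependent width into a decay direction---to be the main obstacle; the remaining steps are comparatively routine.

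Third, I would show that $B$ is large along the same ray. For part (i), $0<\rho(B)<\tfrac12$ gives $\cos\pi\rho(B)>0$, and the $\cos\pi\rho$ theorem yields
$$\liminf_{r\to\infty}\frac{\log m(r,B)}{\log M(r,B)}\ge \cos\pi\rho(B),$$
where $m(r,B)=\min_{|z|=r}|B(z)|$; choosing a sequence $r_n\to\infty$ avoiding $E_1$ along which $\log M(r_n,B)\ge r_n^{\rho(B)-\varepsilon}$ makes $m(r_n,B)$, and hence $|B(r_ne^{i\theta_0})|$, grow faster than any power of $r_n$. For part (ii), the Fabry gap hypothesis plays the identical role: it forces $\log|B(re^{i\theta})|=(1+o(1))\log M(r,B)$ uniformly in $\theta$ outside a set of $r$ of finite logarithmic measure, so again $|B(r_ne^{i\theta_0})|$ is superpolynomially large on a suitable sequence.

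Finally, evaluating the rewritten equation at $z_n=r_ne^{i\theta_0}$ gives
$$m(r_n,B)\le |B(z_n)|\le r_n^{\,2(\sigma-1+\varepsilon)}+|A(z_n)|\,r_n^{\,\sigma-1+\varepsilon}=O\!\left(r_n^{K}\right),$$
contradicting the superpolynomial lower bound on $m(r_n,B)$ from the previous step. Hence no finite-order non-trivial solution exists, establishing the theorem in both cases.
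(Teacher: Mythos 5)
Your overall scheme coincides with the paper's: the paper does not in fact prove this statement (it is quoted from Kwon and from Kumar--Mehra--Saini), but its proof of the analogous Theorem 5 follows exactly your outline --- Gundersen's logarithmic-derivative estimate, a direction in which $|A|$ is small, a minimum-modulus lower bound for $|B|$ on whole circles of a suitable set of radii, and a polynomial bound on $M(r,B)$ contradicting transcendence. The step you yourself flag as the main obstacle is the genuine gap: the claim that the genus-dependent width conditions force the Phragm\'en--Lindel\"of indicator of $A$ to be negative in some direction is precisely Kwon's minimum-modulus lemma (the paper's Lemma~\ref{kwon}), which, after rotating the plane so that the zero sector sits inside $S(p,\epsilon)$, gives $|A(-r)|\le\exp(-cr^{p})$ for all large $r$. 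You do not carry out this computation, and note that the decay exponent it yields is the genus $p$, not $\rho(A)-\varepsilon$ (harmless for the contradiction, but your asserted bound is not what the canonical-product estimate actually produces). Two further points need repair. First, the $\cos\pi\rho$ theorem is false in the $\liminf$ form you state; what is available is $\limsup_{r\to\infty}\log m(r,B)/\log M(r,B)\ge\cos\pi\rho(B)$, or, in Barry's refinement, $\log m(r,B)>\cos(\pi\sigma)\log M(r,B)$ on a set of lower logarithmic density at least $1-\rho(B)/\sigma$ for $\rho(B)<\sigma<\tfrac12$; you need this density version to guarantee radii that simultaneously avoid the finite-logarithmic-measure exceptional set in Gundersen's lemma. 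Second, the Fabry-gap estimate $\log m(r,B)>(1-\xi)\log M(r,B)$ holds only on a set of positive upper logarithmic density (the paper's Lemma~\ref{fi}), not outside a set of finite logarithmic measure as you assert; positive density still suffices to intersect the complement of the exceptional set, so the conclusion survives. With Kwon's lemma supplied and the density bookkeeping corrected, your argument closes and is essentially the paper's.
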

Motivated by above results we have replaced $B(z)$ to be a transcendental entire function having a multiply connected Fatou component.
\begin{theorem}
Suppose that $A(z)$ be an entire function of finite non-integral order with $\rho(A)>1$, and that all the zeros of $A(z)$ lies in the angular sector  $\theta_1<argz<\theta_2$ satisfying
$$\theta_2-\theta_1<\frac{\pi}{p+1}$$ if $p$ is odd, and
$$\theta_2-\theta_1<\frac{(2p-1)\pi}{2p(p+1)}$$ if $p$ is even, where $p$ is the genus of $A(z)$ and let $B(z)$ be a transcendental entire function with a multiply connected Fatou component. Then all non-trivial solutions of equation \eqref{cde1} are of infinite order.
\end{theorem}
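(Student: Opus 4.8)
The plan is to argue by contradiction: assume $f\not\equiv 0$ solves \eqref{cde1} with $\rho(f)<\infty$, and obtain a contradiction by evaluating the equation at points where $B$ is forced to be large while $A$ is forced to be small. This keeps the skeleton of the proof of Theorem 4(ii): the role played there by the sectorial distribution of the zeros of $A$ is retained verbatim, and only the input that made $B$ large on a family of circles — Fabry gap there, the $\cos\pi\rho$ theorem in Kwon's case — is to be replaced by the hypothesis that $B$ has a multiply connected Fatou component. So the real work is to show that this dynamical hypothesis supplies the same quantitative largeness of $B$.

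The first ingredient I would extract is the growth property of $B$. A transcendental entire function whose Fatou set has a multiply connected component has such a component as a bounded wandering domain that eventually surrounds the origin, and the structure theory of multiply connected wandering domains produces round annuli $\{\,\varrho_n<|z|<R_n\,\}$, with $\log(R_n/\varrho_n)\to\infty$, on which the minimum modulus is comparable to the maximum modulus. I would use this to produce a set $F\subset(1,\infty)$ — a union of annular intervals whose logarithmic lengths tend to infinity, hence of positive upper logarithmic density in the sense of Theorem 3 — with
\[
\frac{\log m(r,B)}{\log M(r,B)}\longrightarrow 1,\qquad\text{so}\qquad |B(z)|\ge M(r,B)^{1-o(1)}\ \text{for all } |z|=r,\ r\in F.
\]
Since $B$ is transcendental, $\log M(r,B)/\log r\to\infty$, so along these circles $\log|B|$ outgrows every power of $\log r$.

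The second ingredient is a ray along which $A$ is small, which I would simply quote from the proof of Theorem 4. Because $A$ has finite non-integral order $\rho(A)>1$ with all zeros confined to the thin sector $\theta_1<\arg z<\theta_2$, its Phragmén--Lindelöf indicator $h_A(\theta)$ is controlled by the theory of functions with sectorially distributed zeros; the precise width bounds, with the genus $p$ and its parity entering exactly through the admissible opening, are what guarantee a direction $\theta_\ast$ in the complementary sector with $h_A(\theta_\ast)<0$. Along that ray $\log|A(re^{i\theta_\ast})|\to-\infty$, so in particular $A$ is at most polynomially large — indeed bounded — there for all large $r$.

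Finally I would combine the two. With $\rho(f)=\sigma<\infty$, Gundersen's logarithmic derivative estimates furnish a set $E$ of finite logarithmic measure and a constant $M>0$ such that $|f^{(k)}(z)/f(z)|\le|z|^{M}$ whenever $|z|=r\notin E$ and $k=1,2$. Choosing $r\in F\setminus E$ (possible for arbitrarily large $r$, since $F$ has positive measure while $E$ is finite) and evaluating $B=-f''/f-A\,(f'/f)$ at $z=re^{i\theta_\ast}$ gives
\[
M(r,B)^{1-o(1)}\le m(r,B)\le|B(z)|\le\Bigl|\tfrac{f''}{f}\Bigr|+|A(z)|\Bigl|\tfrac{f'}{f}\Bigr|\le r^{M}\bigl(1+|A(z)|\bigr)=O\!\bigl(r^{M+c}\bigr),
\]
so $\log M(r,B)=O(\log r)$ along such $r$, contradicting the transcendence of $B$; hence no finite-order solution exists. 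The step I expect to be the main obstacle is the first ingredient: converting the qualitative statement ``$B$ has a multiply connected Fatou component'' into the quantitative $\log m(r,B)\sim\log M(r,B)$ on a set of radii rich enough (positive upper logarithmic density) to meet the ray $\theta_\ast$ while still avoiding $f$'s exceptional set $E$. Securing the uniformity of the minimum-modulus estimate over full circles and the density of the good radii is the crux, after which the sectorial estimate on $A$ and the final comparison are routine.
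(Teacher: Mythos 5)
Your proposal is correct and follows essentially the same route as the paper: assume $\rho(f)<\infty$, apply Gundersen's logarithmic-derivative estimate, use the sectorial confinement of the zeros of $A$ (the paper's Lemma~\ref{kwon}, due to Kwon) to get a ray where $A$ decays, and use the minimum-modulus estimate on expanding annuli coming from the multiply connected Fatou component (the paper's Lemma~\ref{zheng}, due to Zheng, which gives $M(r,B)^{d}\leq L(r,B)$ on a set of radii of infinite logarithmic measure) to force $M(r,B)=O(r^{C})$, contradicting transcendence. The step you flag as the crux is precisely the content of the quoted Zheng lemma, so there is no gap.
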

In several papers like \cite{kks}, \cite{kumarsaini}, \cite{saini}, \cite{wanglaine} and \cite{wlh} authors have considered $A(z)=h(z)e^{P(z)}$, where $P(z)$ is a polynomial of degree $n$ satisfying $\lambda(A)<\rho(A)$ and $B(z)$ with various conditions. Here we have exchanged the conditions of $A(z)$ with $B(z)$ and prove the following result. 
\begin{theorem}
Let $B(z)=h(z)e^{P(z)}$ be a transcendental entire function satisfying $\lambda(B)<\rho(B)=n$, where $P(z)$ is a polynomial of degree $n$ and $A(z)$ satisfies the properties mentioned below, then all non-trivial solutions of \eqref{cde1} are of infinite order.
\begin{enumerate}[(a)]
\item $A(z)$ has Fabry gap
\item $A(z)$ satisfies $T(r,A)\sim \log M(r,A)$ in a set $E$ of positive upper logarithmic density
\item $A(z)$ has multiply connected Fatou component.
\end{enumerate}
\end{theorem}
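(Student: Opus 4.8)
The plan is to argue by contradiction, assuming $f\not\equiv0$ solves \eqref{cde1} with $\rho(f)<\infty$. First I would dispose of the degenerate case: if $A$ is a polynomial then, since $B=he^{P}$ is transcendental, part (ii) of the first theorem (the case $A$ constant being classical) already forces every non-trivial solution to be of infinite order. So I may assume $A$ is transcendental. I would then split on the size of $A$ relative to $n=\rho(B)$. If $\rho(A)<\rho(B)=n$, the conclusion is immediate from part (i) of the first theorem, and none of the hypotheses (a), (b), (c) are needed. Thus the entire content of the theorem lies in the range $\rho(A)\ge n$, and this is exactly where the gap/density/Fatou hypotheses must do their work; that is the reason the companion results required $\rho(A)\neq\rho(B)$ while the present one does not.

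The next step is to extract the single common consequence shared by (a), (b) and (c): each of them guarantees that $A$ is \emph{large on circles}. Precisely, there is an unbounded set $F\subset[1,\infty)$ (of positive upper logarithmic density in cases (a),(b), and a sequence $r_m\to\infty$ in case (c)) on which
\[
\log L(r,A)\ge(1-o(1))\log M(r,A),\qquad m(r,A)\sim\log M(r,A),\qquad \frac{\log M(r,A)}{\log r}\to\infty ,
\]
where $L(r,A)=\min_{|z|=r}|A(z)|$. For (a) this is the minimum-modulus theorem for Fabry gap series; for (b) it follows since $T(r,A)=m(r,A)\le\log M(r,A)$ forces the circular mean of $\log^{+}|A|$ against its maximum; for (c) it is the growth estimate for a transcendental entire function with a multiply connected Fatou component. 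In parallel I would use the exponential structure of $B$: because $\lambda(B)<\rho(B)=n=\deg P$, the factor $|h|$ is negligible against $e^{\operatorname{Re}P}$, so along the critical rays of $P$ one obtains sectors $S^{+}$ on which $|B(z)|\ge\exp(\delta r^{n})$ and complementary sectors $S^{-}$ on which $|B(z)|\le\exp(-\delta r^{n})\to0$, for some $\delta>0$.

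With $\rho(f)<\infty$ fixed, I would invoke Gundersen's estimates for logarithmic derivatives (outside a set of finite logarithmic measure, $|f^{(k)}/f|\le r^{K}$ for $k=1,2$) and keep in reserve $m(r,f'/f)=O(\log r)$. The heart of the matter is played out on the sectors. On $S^{-}$, rewriting \eqref{cde1} as $A\,(f'/f)=-B-f''/f$ and using $|A|\ge L(r,A)$ gives $|f'/f|\le 2r^{K}/L(r,A)$, smaller than any power of $r$ for $r\in F$; thus $f$ is extraordinarily flat on $S^{-}$, it is essentially zero-free there, and $\log^{+}|f'/f|\equiv0$. Feeding this into the Nevanlinna inequality obtained from $A=-f''/f'-B(f/f')$, namely $m(r,A)\le m(r,B)+N(r,1/f)+O(\log r)$ together with $m(r,A)\sim\log M(r,A)$ and $m(r,B)=O(r^{n+\varepsilon})$, I would aim to squeeze $\log M(r,A)$ between a lower bound forced by the circle-largeness of $A$ and an upper bound of order $r^{n}$ plus the zero term $N(r,1/f)$, producing the contradiction.

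The step I expect to be the genuine obstacle is precisely this squeeze in the range $\rho(A)\ge n$. Two difficulties must be overcome simultaneously. First, one must coordinate the density set $F$ (where the largeness of $A$ is guaranteed) with radii on which $\log M(r,A)$ is actually of order at least $r^{n}$, since $\rho(A)$ is a $\limsup$ that need not be realised along $F$; controlling the \emph{argument} of $A$, not merely its modulus, is what is required to locate directions on which $\operatorname{Re}\int A\to-\infty$ (the dominant branch $v\sim -A$ of the Riccati equation $v'+v^{2}+Av+B=0$, $v=f'/f$, then yields $f\sim\exp(-\int A)$ of infinite order). Second, one must exclude the recessive branch $v\sim -B/A$, which is of finite growth in modulus but cannot be realised by an entire function: single-valuedness of $\exp(\int v)$ fails at the zeros of $A$ unless the residues $B/A'$ are integers, so ruling it out forces a genuine analysis of the zero distribution of $A$ dictated by (a), (b) or (c), and bounding $N(r,1/f)$ accordingly. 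Reconciling the super-flatness of $f$ on $S^{-}$ with this forced infinite-order growth, for the same radii in $F$, is the crux of the proof.
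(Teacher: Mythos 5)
You have correctly assembled the three ingredients the paper also uses: a minimum-modulus lower bound $|A(z)|\ge M(r,A)^{c}$ on the whole circle $|z|=r$ for $r$ in an unbounded set (Fabry gap, the $T\sim\log M$ hypothesis, and the multiply connected Fatou component each supply this, exactly as in the paper's Lemmas \ref{fi}, \ref{kwonkim} and \ref{zheng}); the sectorial decay $|B(re^{\iota\theta})|\le\exp((1-\epsilon)\delta(P,\theta)r^{n})$ with $\delta(P,\theta)<0$ coming from $\lambda(B)<\rho(B)=n$ (the paper's Lemma \ref{bank}); and Gundersen's estimate for $f''/f'$. But the proof is not closed. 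Your final paragraph is an honest list of unresolved difficulties rather than an argument, and the route you sketch to resolve them cannot work as stated: the Nevanlinna inequality $m(r,A)\le m(r,B)+N(r,1/f)+O(\log r)$, combined with $m(r,A)\gtrsim\log M(r,A)$ and $N(r,1/f)=O(r^{\rho(f)+\epsilon})$, only yields $\log M(r,A)=O\bigl(r^{\max(n,\rho(f))+\epsilon}\bigr)$, which is no contradiction at all when $\rho(A)\le\max(n,\rho(f))$. Likewise the Riccati branch analysis and the case split on $\rho(A)$ versus $n$ are detours; the theorem needs no comparison of orders.

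The single missing idea is a \emph{pointwise} rather than integrated estimate for $f/f'$: Kwon's lemma (Lemma \ref{kwon2} in the paper) gives, for every large $r$, a point $z_{r}$ with $|z_{r}|=r$ and $|f(z_{r})/f'(z_{r})|\le r$. Writing the equation as $A=-f''/f'-B\,(f/f')$ and evaluating at $z_{r}$ (for $r$ in the set where the minimum-modulus bound for $A$ holds, so that the lower bound applies at $z_{r}$ whatever its argument), one gets
\begin{equation*}
M(r,A)^{c}\le|A(z_{r})|\le\left|\frac{f''(z_{r})}{f'(z_{r})}\right|+|B(z_{r})|\left|\frac{f(z_{r})}{f'(z_{r})}\right|\le r^{\rho(f)}+r\,|B(z_{r})|\le r^{2\rho(f)}(1+o(1)),
\end{equation*}
i.e.\ $M(r,A)$ is bounded by a fixed power of $r$ on an unbounded set of radii, which contradicts the transcendence of $A$ directly --- no lower bound of the form $\log M(r,A)\gtrsim r^{n}$ along the density set is ever needed, and the recessive/dominant branch dichotomy never arises. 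Without this (or an equivalent pointwise bound on $f/f'$), your squeeze does not produce a contradiction, so the proposal has a genuine gap at precisely the step you flagged.
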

Following example shows that if we skip conditions of our theorem then we do get solution of finite order.
\begin{eg}\begin{enumerate}[(a)]
\item 
$f''-e^zf'+e^zf=0$ has a solution $e^z-1$ of finite order $1$.\\
$T(r,e^z)=\frac{r}{\pi}$ and $\log M(r,e^z)=r$,thus $T(r,e^z)\not\sim \log M(r,e^z)$
\begin{enumerate}[(i)]
\item
$A(z)=-e^z$ is a solution of $w''-w=0$ and $B(z)=e^z$ does not satisfy the condition of Theorem 3.  
\item
$B(z)=e^z$ satisfy $\lambda(B)<\rho(B)$ but $A(z)$ does not satisfy the condition of Theorem 6(b)
\end{enumerate}
\item 
$f''+(e^z-1)f'+e^zf=0$ has a solution $e^z$ of finite order $1$.\\
$B(z)=e^z$ satisfy $\lambda(B)<\rho(B)$ but $A(z)$ does not satisfies the condition of Theorem 6(a) i.e. it does not has Fabry gap.
\item
$f''+Cz^2 \displaystyle{ \prod_{n=1}^{\infty}\left(1+\frac{z}{a_n}\right) }f' -Cz \displaystyle{ \prod_{n=1}^{\infty}\left(1+\frac{z}{a_n}\right)f=0 }$ has a solution $z$ of order $0$.
$$A(z)=Cz^2 \displaystyle{ \prod_{n=1}^{\infty}\left(1+\frac{z}{a_n}\right) }$$ where the $a_n$ satisfy $1 < a_1 < a_2 < ...$ and grow so rapidly that $a_{n+1} < A(a_n) <2a_{n+1}$, is constructed by Baker\cite{baker}. It has multiply connected Fatou component but $\lambda(B)=\rho (B)$ i.e. it does not satisfy the condition of Theorem 6(c).
\item 
$f''+Cz^3 \displaystyle{ \prod_{n=1}^{\infty}\left(1+\frac{z}{a_n}\right) }f' -Cz^2 \displaystyle{ \prod_{n=1}^{\infty}\left(1+\frac{z}{a_n}\right)f=0 }$ has a solution $z$ of order $0$.\\
$B(z)$ has multiply connected Fatou component but $\rho(A)\leq 1$ i.e. it does not satisfy the condition of Theorem 5.
\end{enumerate}
\end{eg}

\section{Preliminary Lemma}
Next lemma is due to Gundersen\cite{gundersen} and it gives an estimation of logarithmic derivatives.  
\begin{lemma}\cite{gundersen}\label{gundersen} Let $f$ be a transcendental entire function of finite order $\rho$, let $\Gamma = \lbrace(k_1,j_1), (k_2, j_2) . . . (k_m, j_m)\rbrace$ denote finite set of distinct pairs of integers that satisfies $k_i > j_i \geq 0$, for $i = 1, 2, . . .m$, and let
$\epsilon > 0$ be a given constant. Then the following three statements holds:\begin{enumerate}
\item[(i)] there exists a set $E_1 \subset [0, 2\pi)$ that has linear measure zero, such that if $\psi_0 \in [0, 2\pi) -E_1$, then there is a constant $R_0 = R_0(\psi_0) > 0$ so that for all z satisfying $arg z = \psi_0$ and $|z| \geq R_0$ and for all $(k, j) \in \Gamma$, we have
\begin{equation}
\left|\frac{f^{(k)}(z)}{f^{(j)}(z)}\right| \leq |z|^{(k-j)(\rho-1+\epsilon))}
\end{equation}\label{f"byf}
\item[(ii)] there exists a set $E_2 \subset (1,\infty)$ that has finite logarithmic measure, such that for all z satisfying $|z|$ does not belongs to $E_2 \cup[0, 1]$ and for all $(k, j) \in \Gamma$,
the inequality (2) holds.
\item[(iii)] there exists a set $E_3 \subset [0,\infty)$ that has finite linear measure, such that for all z satisfying $|z|$ does not belongs to $E_3$ and for all $(k, j) \in \Gamma$, we have \begin{equation}\label{f"byf-2}
\left|\frac{f^{(k)}(z)}{f^{(j)}(z)}\right| \leq |z|^{(k-j)(\rho+\epsilon)}.
\end{equation}
\end{enumerate}
\end{lemma}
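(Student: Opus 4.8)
The statement is Gundersen's classical pointwise estimate for logarithmic derivatives, so the plan is to reconstruct its proof. The first reduction is to pass from the mixed quotient $f^{(k)}/f^{(j)}$ to ordinary logarithmic derivatives through the telescoping identity
\[
\frac{f^{(k)}(z)}{f^{(j)}(z)} = \prod_{i=j}^{k-1}\frac{\bigl(f^{(i)}\bigr)'(z)}{f^{(i)}(z)}.
\]
Each factor is the logarithmic derivative of $g_i := f^{(i)}$, again a transcendental entire function of the same order $\rho$, and since $\Gamma$ is finite only the finitely many functions $g_0,\dots,g_{K-1}$ with $K=\max_i k_i$ occur. Thus it suffices to estimate $|g'(z)/g(z)|$ for a single finite-order entire $g$ outside an appropriate exceptional set; the exceptional set of the lemma is then the finite union of the individual ones, which preserves each of the three measure conditions.

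For a single such $g$ with zeros $(a_\mu)$, genus $p\le\rho$, and exponent of convergence $\lambda\le\rho$, I would invoke the Hadamard factorization to write
\[
\frac{g'(z)}{g(z)} = \frac{m}{z} + Q'(z) + z^{p}\sum_{\mu}\frac{1}{a_\mu^{p}\,(z-a_\mu)},
\]
where $\deg Q\le\rho$. The term $m/z$ is $O(1/r)$ and $Q'(z)=O(r^{\rho-1})$ already produces the sharp exponent. In the sum, the zeros with $|a_\mu|>2r$ satisfy $|z-a_\mu|\ge|a_\mu|/2$, so their total contribution is dominated by $r^{p}\sum_{|a_\mu|>2r}|a_\mu|^{-(p+1)}=O(r^{\rho-1+\epsilon})$ by the standard tail estimate for a zero sequence of convergence exponent $\lambda$; likewise the zeros with $|a_\mu|\le r/2$ lie at distance $\ge r/2$ from $z$ and contribute $O(r^{\rho-1+\epsilon})$.

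The whole difficulty, and the step I expect to be the main obstacle, is the block of zeros with $r/2<|a_\mu|\le 2r$, where the weight $r^{p}/|a_\mu|^{p}$ is of order one and the estimate reduces to controlling $\sum_{r/2<|a_\mu|\le 2r}|z-a_\mu|^{-1}$; here $z$ may sit arbitrarily close to a zero. The device I would use is to pass to the mean over the circle $|z|=r$: since $\frac{1}{2\pi}\int_0^{2\pi}|re^{i\theta}-a|^{-1}\,d\theta$ is $O(r^{-1})$ away from $|a|=r$ and has only an integrable logarithmic singularity as $|a|\to r$, summing over the $O(r^{\rho+\epsilon})$ zeros of the block yields
\[
\frac{1}{2\pi}\int_0^{2\pi}\ \sum_{r/2<|a_\mu|\le 2r}\frac{d\theta}{|re^{i\theta}-a_\mu|} = O\!\bigl(r^{\rho-1+\epsilon}\bigr),
\]
the logarithmic factor being absorbed into $\epsilon$.

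It then remains to convert this integrated bound into a pointwise bound outside an exceptional set, and this conversion is precisely what produces the three alternatives. A Chebyshev-type estimate shows that on each circle the angular set where the sum exceeds a further power of $r$ is small, which along a fixed ray $\arg z=\psi_0$ accumulates to a set $E_1$ of arguments of linear measure zero (statement (i)); integrating the excess over $r$ and applying a Borel--Nevanlinna growth lemma confines the radii on which the sharp bound can fail to a set $E_2$ of finite logarithmic measure (statement (ii)); and if one accepts the cruder exponent $\rho+\epsilon$, which leaves more room, one may instead delete the disks $D\bigl(a_\mu,|a_\mu|^{-\rho-\epsilon}\bigr)$, whose radial projections have total length $\sum_\mu 2|a_\mu|^{-\rho-\epsilon}<\infty$ because $\rho+\epsilon>\lambda$, giving a set $E_3$ of finite linear measure (statement (iii)). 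The delicate points throughout are the bookkeeping that keeps each excluded set inside its prescribed measure class and the uniform control of the logarithmic singularity of the circular mean; once these are in hand, multiplying the $k-j$ factor estimates and taking the finite union over the pairs of $\Gamma$ completes the argument.
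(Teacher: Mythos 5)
This lemma is not proved in the paper at all: it is Gundersen's logarithmic-derivative estimate, quoted with the citation \cite{gundersen}, so there is no proof of the paper's own to compare yours against. Judged on its own merits, your reconstruction makes the right opening moves (the telescoping reduction of $f^{(k)}/f^{(j)}$ to logarithmic derivatives of the entire functions $f^{(i)}$, which all have order $\rho$; Hadamard factorization; the far/near/middle splitting of the zeros), and your far and near blocks are handled correctly. But the argument breaks down exactly at the step you yourself flag as the main obstacle, and the device you propose does not overcome it.

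The circular-mean estimate is false as stated. For $a = re^{i\alpha}$ one has $|re^{i\theta}-a|\approx r|\theta-\alpha|$ near $\theta=\alpha$, so $\int_0^{2\pi}|re^{i\theta}-a|^{-1}\,d\theta$ \emph{diverges} whenever a zero has modulus exactly $r$, and it grows like $\frac{1}{r}\log\frac{r}{|\,|a|-r\,|}$ as $|a|\to r$. The singularity is integrable in the radial variable, not in $\theta$, so your displayed bound $O(r^{\rho-1+\epsilon})$ cannot hold for every $r$: it fails badly whenever some $|a_\mu|$ equals or nearly equals $r$. To use the integrability one must also average over $r$ (an area average over an annulus), after which Chebyshev gives the bound only off an exceptional set of radii. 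That could conceivably be pushed through for (ii) and (iii), but it destroys your route to (i): statement (i) requires a single measure-zero set $E_1$ of arguments such that along each ray $\arg z=\psi_0\notin E_1$ the estimate holds for \emph{all} large $r$, whereas a per-circle Chebyshev bound produces a different small bad angular set for each $r$, and the union of these sets over all large $r$ has no reason to be of measure zero; part (i) is in fact the hardest part of Gundersen's theorem and needs a separate argument. There is also a quantitative gap in your (iii): outside the discs $D(a_\mu,|a_\mu|^{-\rho-\epsilon})$ each term of the middle block is at most $(2r)^{\rho+\epsilon}$, but the block contains $O(r^{\lambda+\epsilon})$ zeros, so you only get $O(r^{\rho+\lambda+2\epsilon})$, which can be of size $r^{2\rho+2\epsilon}$ --- far above the target $r^{\rho+\epsilon}$. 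Obtaining the stated exponents requires either Cartan's lemma (outside discs of total radius $2h$ the $j$-th nearest zero is at distance roughly $hj/(en)$, so the sum is $O((n/h)\log n)$) or Gundersen's actual machinery, which starts from the differentiated Poisson--Jensen formula rather than Hadamard factorization and controls clustered zeros through differences of the counting function $n(t)$; the sharp exponent $\rho-1+\epsilon$ there comes from $T(2r,f)/r$, not from term-by-term bounds on the zero sum.
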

\begin{lemma}\cite{fuchs}\label{fuchs}
Let $f(z)$ be a meromorphic function of finite order $\rho$. Given $\xi> 0$ and $\delta$, $0 <\delta<1/2$, there is a constant $K(\rho,\xi)$ such that for all $r$ in a set $F$ of lower logarithmic density greater than $1-\xi$ and for every interval $J$ of length $\delta$ $$r\int\limits_J\left|\frac{f'(re^{\iota\theta})}{f(re^{\iota\theta})}\right|d\theta < K(\rho,\xi)(\delta\log\frac{1}{\delta})T(r,f).$$
\end{lemma}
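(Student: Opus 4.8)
The plan is to start from the Poisson--Jensen formula, which for a meromorphic $f$ of finite order $\rho$ represents $\log|f(z)|$ on $|z|=r$ in terms of the boundary values on $|z|=R$ together with the zeros $a_\mu$ and poles $b_\nu$ in $|z|<R$. Differentiating that identity in $z$ yields the exact representation
\begin{align*}
\frac{f'(z)}{f(z)} ={}& \frac{1}{2\pi}\int_0^{2\pi}\frac{2Re^{i\phi}}{(Re^{i\phi}-z)^2}\log|f(Re^{i\phi})|\,d\phi \\
&+ \sum_{|a_\mu|<R}\left(\frac{1}{z-a_\mu}+\frac{\overline{a_\mu}}{R^2-\overline{a_\mu}z}\right) - \sum_{|b_\nu|<R}\left(\frac{1}{z-b_\nu}+\frac{\overline{b_\nu}}{R^2-\overline{b_\nu}z}\right).
\end{align*}
I would fix the auxiliary radius $R=2r$, put $z=re^{i\theta}$, and then estimate the three groups of terms separately after multiplying by $r$ and integrating $\theta$ over an arc $J$ of length $\delta$. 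The target is that each group contributes at most a constant multiple of $\delta\log\frac1\delta\,T(r,f)$ for $r$ in a suitable set $F$.

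The boundary integral is the easy part: with $R=2r$ one has $|Re^{i\phi}-z|\ge r$, so after multiplying by $r$ and integrating over $J$ this term is bounded by a constant times $\delta\cdot\frac{1}{2\pi}\int_0^{2\pi}\big|\log|f(2re^{i\phi})|\big|\,d\phi$, which by the first fundamental theorem is $O(\delta\,T(2r,f))$. The reflected terms $\overline{a_\mu}/(R^2-\overline{a_\mu}z)$ and $\overline{b_\nu}/(R^2-\overline{b_\nu}z)$ are each $O(1/r)$, and there are $n(2r,f)+n(2r,1/f)=O(T(4r,f))$ of them; multiplying by $r$ and integrating over $J$ gives $O(\delta\,T(4r,f))$. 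Thus both are harmless once $T(4r,f)$ is comparable to $T(r,f)$.

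The delicate part is the principal part $\sum 1/(z-a_\mu)-\sum 1/(z-b_\nu)$. For a single point $a=se^{i\alpha}$ the elementary identity $|re^{i\theta}-a|^2=(r-s)^2+4rs\sin^2\frac{\theta-\alpha}{2}$ yields the lower bound $|re^{i\theta}-a|\gtrsim\sqrt{(r-s)^2+rs(\theta-\alpha)^2}$, whence
\[
r\int_J\frac{d\theta}{|re^{i\theta}-a|}\ \lesssim\ \log\!\Big(1+\frac{r\delta}{|r-s|}\Big).
\]
The $\log\frac1\delta$ factor is produced exactly by those zeros and poles whose modulus is close to $r$ and whose argument lies in or near $J$. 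Summing these single-point bounds over all $a_\mu,b_\nu$ in the annulus and showing the total is $\lesssim\delta\log\frac1\delta\,T(r,f)$ is the heart of the matter: it requires that the moduli of the zeros and poles do not cluster abnormally near $r$. This is the step I expect to be the \emph{main obstacle}, and it is precisely what forces the exclusion of an exceptional set of radii.

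Finally I would assemble the exceptional set. The comparison $T(4r,f)\le K(\rho,\xi)\,T(r,f)$ fails only on a set of small upper logarithmic density: since $\psi(u):=\log T(e^u)$ grows at most like $(\rho+\varepsilon)u$, the bound $\int_0^U\big(\psi(u+c)-\psi(u)\big)\,du\le c\,(\rho+\varepsilon)(U+c)$ together with Chebyshev's inequality shows that $\{r:T(4r,f)>K\,T(r,f)\}$ has upper logarithmic density $O\big((\rho+\varepsilon)/\log K\big)$, which is made $<\xi/2$ by taking $K=K(\rho,\xi)$ large. Likewise, the finite-order bound $n(t)=O(t^{\rho+\varepsilon})$ lets me discard the radii near which zero/pole moduli cluster, again on a set of upper logarithmic density $<\xi/2$. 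On the complement $F$ — of lower logarithmic density $>1-\xi$ — all three estimates hold at once and combine to give $r\int_J\big|f'(re^{i\theta})/f(re^{i\theta})\big|\,d\theta<K(\rho,\xi)\big(\delta\log\frac1\delta\big)T(r,f)$, uniformly over arcs $J$ of length $\delta$, as required.
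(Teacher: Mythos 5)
The paper itself contains no proof of this lemma: it is quoted directly from Fuchs \cite{fuchs}, so there is no in-paper argument to compare against and your proposal has to stand on its own. Its skeleton is the right one — the differentiated Poisson--Jensen formula with $R=2r$, the three-way split into boundary term, reflected terms and principal parts, the single-point estimate $r\int_J |re^{i\theta}-a|^{-1}\,d\theta \lesssim \log\bigl(1+\tfrac{r\delta}{|r-s|}\bigr)$, and a Borel-type argument to replace $T(4r,f)$ by $K(\rho,\xi)T(r,f)$ off a set of small upper logarithmic density. The boundary and reflected terms are indeed routine and your estimates for them are correct.

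The genuine gap is the step you yourself label ``the heart of the matter,'' and it cannot be closed by the tools you name ($n(t)=O(t^{\rho+\varepsilon})$ plus a single Chebyshev estimate). Two concrete problems. First, a term-by-term summation of the single-point bounds does not produce the factor $\delta\log\frac1\delta$: if $\asymp T(r,f)$ zeros lie at radial distance $\asymp\delta r$ from the circle with arguments in (or within $\delta$ of) $J$, each contributes $\asymp 1$, so the naive sum is $\asymp T(r,f)$, not $\delta\log\frac1\delta\,T(r,f)$; and zeros with $|r-s|\ll \delta r$ contribute $\log\frac{r\delta}{|r-s|}$, which is unbounded. One therefore needs a quantitative non-clustering statement at \emph{every} dyadic radial scale $2^{-k}\delta r$ and $2^{j}\delta r$ simultaneously, valid for all $r$ in the good set, with the per-scale constants chosen so that both the resulting sum telescopes to $C\delta\log\frac1\delta\,T(r,f)$ and the union of the per-scale exceptional sets still has upper logarithmic density $<\xi$; this tension (larger per-scale constants shrink the exceptional set but inflate the final bound over the $\log\frac1\delta$ intermediate scales) is exactly where the angular localization must be used jointly with the radial one, and is the actual content of Fuchs's argument. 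Second, the bound must hold \emph{uniformly over all} arcs $J$ of length $\delta$ for each good $r$, while the radial counting function carries no information about where the zeros sit in argument; your sketch never addresses how the exceptional set, which is a set of radii only, delivers a bound uniform in the position of $J$. As written, the proposal is a correct plan with the decisive combinatorial/measure-theoretic step asserted rather than proved.
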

\begin{lemma}\cite{myself}\label{myself}
Let $f(z)$ be an entire function satisfying $T(r,f)\sim\log M(r,f)$ in a set $E$ of positive upper logarithmic density. For given $0<c<\frac{1}{4}$ and $r\in E$, the set $$I_r=\{\theta\in [0,2\pi): \log |f(re^{\iota\theta})|\leq (1-c)\log M(r,f)\}$$ has linear measure zero. 
\end{lemma}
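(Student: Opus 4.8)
The plan is to exploit the elementary fact that for a transcendental entire function the Nevanlinna characteristic coincides with the angular mean of $\log^+|f|$: since $f$ is entire it has no poles, so $N(r,f)=0$ and
$$T(r,f)=m(r,f)=\frac{1}{2\pi}\int_0^{2\pi}\log^+\bigl|f(re^{\iota\theta})\bigr|\,d\theta,$$
together with the trivial pointwise bound $\log^+|f(re^{\iota\theta})|\le\log M(r,f)$. The hypothesis $T(r,f)\sim\log M(r,f)$ on $E$ asserts that these two quantities are asymptotically equal, so the angular mean of $\log^+|f|$ cannot fall appreciably below its maximum on a set of angles of non-negligible measure. The set $I_r$ is precisely the collection of ``bad'' angles where $\log|f|$ dips below $(1-c)\log M(r,f)$, and the goal is to show that its linear measure is forced to be negligible as $r\to\infty$ through $E$.

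First I would restrict to $r\in E$ large enough that $M(r,f)>1$, so that $\log M(r,f)>0$ and $\log^+ M(r,f)=\log M(r,f)$. Splitting the defining integral for $T(r,f)$ over $I_r$ and its complement, on $I_r$ I use $\log^+|f|\le(1-c)\log M(r,f)$ (legitimate because $(1-c)\log M(r,f)>0$), while on $[0,2\pi)\setminus I_r$ I use the crude bound $\log^+|f|\le\log M(r,f)$. Writing $|I_r|$ for the linear measure, this yields
$$2\pi\,T(r,f)\le(1-c)\log M(r,f)\,|I_r|+\log M(r,f)\,(2\pi-|I_r|)=\log M(r,f)\,\bigl(2\pi-c\,|I_r|\bigr),$$
and hence
$$\frac{c\,|I_r|}{2\pi}\le 1-\frac{T(r,f)}{\log M(r,f)}.$$

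The final step is to let $r\to\infty$ through $E$: by hypothesis the right-hand side tends to $0$, so $|I_r|\to0$, which is the content of the lemma (the exceptional angular sets become negligible in the limit). I do not expect a genuine obstacle here; the only points requiring care are the passage from $\log^+$ to $\log$, where one must restrict to $r$ large enough that $\log M(r,f)>0$ so that the estimate $\log^+|f|\le(1-c)\log M(r,f)$ on $I_r$ is valid, and the observation that the estimate is uniform in $c$. Indeed any $c\in(0,1)$ drives $|I_r|$ to $0$, so the restriction $0<c<\frac{1}{4}$ is merely a convenient normalization dictated by the later applications and plays no role in the estimate itself.
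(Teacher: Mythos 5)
Your computation is correct, and it is the standard argument; note first, though, that the paper itself never proves this lemma --- it is imported wholesale from the unpublished reference \cite{myself} --- so there is no internal proof to compare yours against. The chain $T(r,f)=m(r,f)$ (valid since $f$ has no poles), the split of $\frac{1}{2\pi}\int_0^{2\pi}\log^+|f|\,d\theta$ over $I_r$ and its complement for $r$ large enough that $M(r,f)>1$, and the resulting bound $|I_r|\le\frac{2\pi}{c}\bigl(1-\frac{T(r,f)}{\log M(r,f)}\bigr)$ are all sound, and this is surely how the cited source (and the closely related argument in \cite{kwonkim}) proceeds.

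The one substantive issue is the mismatch between what you prove and what the statement literally says. Your argument yields $|I_r|\to 0$ as $r\to\infty$ through $E$; the lemma asserts $|I_r|=0$ for each $r\in E$, and you gloss over this by declaring the limit statement to be ``the content of the lemma.'' You are right that the limit statement is the only provable content, but you should say explicitly that the literal statement is false, not merely a rephrasing: if some circle $|z|=r$ with $r\in E$ passes through a zero of $f$, then by continuity $|f(re^{\iota\theta})|<M(r,f)^{1-c}$ on an open interval of $\theta$, so $|I_r|>0$; and such radii can perfectly well lie in $E$ (for instance, entire functions with sufficiently sparse zeros satisfy $T(r,f)\sim\log M(r,f)$ for all large $r$ with no excluded set, yet every circle through a zero gives $|I_r|>0$). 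Fortunately, the limit version is all the paper ever uses: in Lemma \ref{kwonkim} and in the proofs of Theorems 3 and 6(b), one only needs that for all large $r\in E$ the set $I_r$ is too small to swallow a fixed sector or all of $[0,2\pi)$, and $|I_r|\to 0$ gives exactly that. So treat this as a defect in the lemma's wording that your proof silently corrects; flag it rather than absorb it into a parenthetical.
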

Next lemma is extracted from proof of the theorem in \cite{kwonkim}
\begin{lemma}\label{kwonkim}Let $f(z)$ be an entire function of finite order satisfying $T(r,f)\sim \log M(r,f)$ in a set $\overline{\log dens}E >0$ and satisfies condition of Lemma 4 in a set $F$ such that $\underline{\log dens}(F)>1-\xi$, then $$|f(z)|>(1-2c)\log M(r,f),$$ where $0<c<1/4$, $r\in E\cap F$ and $\theta\in [0,2\pi]\setminus I_r$ where $$I_r=\{\theta\in [0,2\pi): \log |f(re^{\iota\theta})|\leq (1-c)\log M(r,f)\}.$$ 
\end{lemma}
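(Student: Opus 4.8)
The plan is to combine the pointwise lower bound supplied by Lemma~\ref{myself} with the interval estimate on the logarithmic derivative from Lemma~\ref{fuchs}, the bridge between the two being the hypothesis $T(r,f)\sim\log M(r,f)$, which lets us trade $T(r,f)$ for $\log M(r,f)$ up to a factor arbitrarily close to $1$. (Throughout I read the conclusion, as the comparison with $\log M(r,f)$ forces, as the bound $\log|f(re^{\iota\theta})|>(1-2c)\log M(r,f)$.) First I would check that the admissible radii really exist: since $\overline{\log dens}\,E>0$ and $\underline{\log dens}\,F>1-\xi$, the complement $[1,\infty)\setminus F$ has upper logarithmic density at most $\xi$, so by subadditivity $\overline{\log dens}(E\cap F)\ge\overline{\log dens}\,E-\xi>0$ once $\xi$ is taken smaller than $\overline{\log dens}\,E$; as Lemma~\ref{fuchs} is available for every $\xi>0$, this is a free choice. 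In particular $E\cap F$ is unbounded and the estimate below is meaningful for arbitrarily large $r$.

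Fix such an $r\in E\cap F$. Because $r\in E$, Lemma~\ref{myself} tells us $I_r$ has linear measure zero, so on its complement we already have the stronger inequality $\log|f(re^{\iota\theta})|>(1-c)\log M(r,f)$; the substance of the present lemma is that this bound is \emph{stable} under small displacements along the circle, which is exactly what turns an almost-everywhere statement into one usable on a full arc in the theorems that follow. Since $|I_r|=0$, every subinterval of $[0,2\pi)$ of positive length meets the complement of $I_r$, so given any target argument $\theta\notin I_r$ I can pick an anchor $\theta^\ast\notin I_r$ with $|\theta-\theta^\ast|<\delta$, where $\delta\in(0,1/2)$ is to be fixed.

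Next I would propagate the bound from $\theta^\ast$ to $\theta$. Away from the zeros of $f$ one has $\tfrac{d}{d\phi}\log|f(re^{\iota\phi})|=\operatorname{Re}\!\big(\iota re^{\iota\phi}f'/f\big)$, whence $\big|\tfrac{d}{d\phi}\log|f(re^{\iota\phi})|\big|\le r\,|f'/f|$, and integrating over the sub-arc from $\theta^\ast$ to $\theta$, which lies inside an interval $J$ of length $\delta$, gives
$$\big|\log|f(re^{\iota\theta})|-\log|f(re^{\iota\theta^\ast})|\big|\le r\int_J\Big|\frac{f'(re^{\iota\phi})}{f(re^{\iota\phi})}\Big|\,d\phi.$$
Since $r\in F$, Lemma~\ref{fuchs} bounds the right-hand side by $K(\rho,\xi)\,(\delta\log\tfrac1\delta)\,T(r,f)$. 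Using $T(r,f)\le 2\log M(r,f)$ for all large $r\in E$ (a consequence of $T\sim\log M$) and choosing $\delta$ so small that $2K(\rho,\xi)\,\delta\log\tfrac1\delta<c$, the displacement is controlled by $c\log M(r,f)$. Combining with the anchor bound yields
$$\log|f(re^{\iota\theta})|>(1-c)\log M(r,f)-c\log M(r,f)=(1-2c)\log M(r,f),$$
which is the assertion.

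The step I expect to be most delicate is integrating $\tfrac{d}{d\phi}\log|f|$ across the arc $J$: if $f$ vanishes on $\{|z|=r\}$ strictly between $\theta^\ast$ and $\theta$ the integrand is only locally integrable, so one must lean on Lemma~\ref{fuchs} precisely because it is built to absorb such zeros, and one must keep $\theta\notin I_r$ (so $f(re^{\iota\theta})\ne0$) to keep the left-hand side finite. Quantitatively the argument hinges on balancing the three small parameters—$\xi$ (governing the supply of radii), $\delta$ (governing the Fuchs error), and $c$ (the loss from $(1-c)$ to $(1-2c)$)—so that the constant $K(\rho,\xi)$ does not grow as $\xi\to0$ faster than $\delta\log\tfrac1\delta\to0$ can compensate; since $\delta$ is chosen only after $\xi$ and $c$ are fixed, no circularity arises.
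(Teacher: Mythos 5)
Your argument is correct, and it is essentially the argument of Kwon--Kim that the paper is implicitly invoking: the paper gives no proof of this lemma at all (it is merely ``extracted from the proof of the theorem in \cite{kwonkim}''), so your write-up actually supplies the missing reasoning. The three ingredients you use --- the measure-zero exceptional set from Lemma \ref{myself}, the anchor point $\theta^\ast\notin I_r$ within distance $\delta$ of the target $\theta$, and the Fuchs integral bound $r\int_J|f'/f|\,d\phi\le K(\rho,\xi)\bigl(\delta\log\tfrac1\delta\bigr)T(r,f)$ to control the oscillation of $\log|f(re^{\iota\phi})|$ along the arc --- are exactly the right ones, and the parameter bookkeeping ($\xi$, then $c$, then $\delta$ with $2K\delta\log\tfrac1\delta<c$) is in the correct order. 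Two remarks. First, you rightly read the conclusion as $\log|f(re^{\iota\theta})|>(1-2c)\log M(r,f)$; as printed, with $|f(z)|$ on the left, the statement is a typo. Second, as literally stated the lemma is vacuous: for $\theta\notin I_r$ the definition of $I_r$ already gives the stronger bound $(1-c)\log M(r,f)$ with no work. What your propagation argument proves is the nontrivial version (the bound $(1-2c)\log M(r,f)$ holds for \emph{every} $\theta$, not just $\theta\notin I_r$), which is what Kwon--Kim actually establish and presumably what was intended. One small imprecision: Lemma \ref{fuchs} does not ``absorb'' zeros of $f$ on $|z|=r$; rather, near a zero the integrand behaves like $|\phi-\phi_0|^{-1}$ and is \emph{not} integrable, so the finiteness of the Fuchs bound for $r\in F$ forces $f$ to have no zeros on those circles, which is precisely what legitimizes your fundamental-theorem-of-calculus step. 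None of this affects the validity of your proof.
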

Let $$M(r,f) = max.\{|f(z)|:|z|=r\}$$ and $L(r,f) = min.\{|f(z)|:|z|=r\}$, then following lemma gives relation between $M(r,f)$ and $L(r,f)$ when $f$ has at most finite number of poles.
\begin{lemma}\cite{zheng}\label{zheng}
Let f be a transcendental meromorphic function with at most finitely many poles. If $\mathcal{J}(f)$ has only bounded components, then for any complex number $a$,  there exist a constant $0<d<1$ and two sequences $\{r_n\}$ and $\{R_n\}$ of positive numbers with $r_n\to\infty$ and $R_n/r_n\to\infty(n\to\infty)$ such that
$$M(r,f)^d\leq L(r,f), r\in G$$ where $G=\cup_{n=1}^\infty\{r:r_n<r<R_n\}.$  
\end{lemma}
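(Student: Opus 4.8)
The plan is to combine the dynamical structure forced by the hypothesis with a Harnack-type comparison for the harmonic function $\log|f|$. First I would reduce to a setting where $\log|f|$ is harmonic: since $f$ has at most finitely many poles, there is $R_0>0$ such that $f$ is holomorphic on $\{|z|>R_0\}$, and there $u:=\log|f|$ is harmonic away from the zeros of $f$. The target inequality $M(r,f)^{d}\le L(r,f)$ is, after taking logarithms, equivalent to $\log M(r,f)\le \tfrac1d\,\log L(r,f)$, i.e.\ to a bound on how far $u$ can drop below its maximum on a circle $|z|=r$. So the whole problem is to control the oscillation of the positive harmonic function $u$ on suitably chosen circles.

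Next I would extract the annuli on which the estimate holds from the dynamical hypothesis. For a meromorphic function with finitely many poles, the condition that $\mathcal{J}(f)$ has only bounded components forces the Fatou set to contain a multiply connected component, and by the theory of Baker \cite{baker} such a component is a bounded wandering domain on which the iterates $f^{k}$ tend to $\infty$ locally uniformly. Its forward orbit produces a sequence of round annuli $A_n=\{z:r_n<|z|<R_n\}$ with $r_n\to\infty$ and $R_n/r_n\to\infty$ lying in the Fatou set. For all large $n$ these annuli contain no zero of $f$ (a point of $A_n$ escapes to $\infty$ under iteration, whereas a zero would be carried to $0$), so $u=\log|f|$ is harmonic on $A_n$; moreover the blow-up property gives $|f|>1$, hence $u>0$, throughout $A_n$ once $n$ is large.

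The heart of the argument is then a single application of Harnack's inequality. Shrinking slightly to $A_n'=\{2r_n<|z|<R_n/2\}$ (still with $(R_n/2)/(2r_n)\to\infty$), every circle $|z|=r\subset A_n'$ sits at bounded relative distance from the boundary of $A_n$, so Harnack's inequality for the positive harmonic function $u$ furnishes a constant $C\ge 1$, independent of $n$ because the conformal modulus of $A_n$ tends to infinity, with \[\max_{|z|=r}u(z)\le C\,\min_{|z|=r}u(z),\qquad\text{that is}\qquad \log M(r,f)\le C\,\log L(r,f).\] Dividing by $C$ yields $\log L(r,f)\ge \tfrac1C\log M(r,f)$, i.e.\ $M(r,f)^{d}\le L(r,f)$ with $d=1/C\in(0,1)$, valid for every $r$ in $G=\bigcup_n\{r:2r_n<r<R_n/2\}$; relabelling $2r_n$ and $R_n/2$ as $r_n$ and $R_n$ finishes the proof. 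The main obstacle is the structural input rather than the estimate: one must know that a multiply connected Fatou component really does generate round annuli of diverging modulus that lie in the Fatou set and are free of zeros, and that the escaping (blow-up) behaviour is uniform enough to keep both the positivity of $u$ and the Harnack constant $C$ under control simultaneously across all the annuli. Once Baker's structure theorem is invoked in the form just described, the remaining comparison is routine, and this is essentially the route taken in \cite{zheng}.
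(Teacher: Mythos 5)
First, a caveat: the paper itself gives no proof of this statement --- it is quoted verbatim from Zheng \cite{zheng} and used as a black box --- so there is no in-paper argument to compare yours against; what follows assesses your proposal on its own terms. Your overall architecture is the right one (and is essentially the mechanism behind Zheng's result): produce round annuli $A_n=\{r_n<|z|<R_n\}$ of diverging moduli inside the Fatou set, check that $\log|f|$ is positive and harmonic there, and run a scale-invariant Harnack chain around each circle. The Harnack step itself is correct and genuinely uniform: for $2r_n\le r\le R_n/2$ the collar $\{r/2<|z|<2r\}$ lies in $A_n$, and after rescaling to $\{1/2<|w|<2\}$ the chain constant $C$ is absolute, giving $d=1/C$ independent of $n$ (the positivity of $\log|f|$ is what makes the passage from $\log M\le C\log L$ to $M^{d}\le L$ legitimate).

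There are, however, two genuine gaps in the supporting claims. (1) \emph{Zero-freeness.} You argue that $A_n$ contains no zero because ``a zero would be carried to $0$'' while points of $A_n$ escape. This is not a contradiction: an orbit may pass through $0$ and still escape; your argument only shows that $0$ would then be an escaping point, which is perfectly possible. The correct argument uses the component structure: $f(U_n)\subset U_{n+1}$, where $U_{n+1}$ is the next Fatou component in the orbit, and for large $n$ the component $U_{n+1}$ \emph{surrounds} $0$ without containing it; hence $f$ omits the value $0$ on all of $U_n\supset A_n$. (2) \emph{Positivity.} You assert that ``the blow-up property gives $|f|>1$ throughout $A_n$,'' but blow-up concerns the iterates $f^k$ as $k\to\infty$, not the single application of $f$; nothing you have said excludes points of $A_n$ where $|f|<1$. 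Again the fix is structural: $f(A_n)\subset U_{n+1}$, and $U_{n+1}$ cannot reach into a small disk, because $U_{n+1}$ is connected and contains the distant annulus $A_{n+1}$, so a point of small modulus in $U_{n+1}$ would be joined inside $U_{n+1}$ to $A_{n+1}$ by a path crossing every intermediate circle, in particular crossing $A_n$, which lies in the disjoint component $U_n$ --- a contradiction; consequently $|f|$ is in fact large, not merely $>1$, on $A_n$. Finally, be aware that the structure theory you invoke (bounded, wandering, multiply connected components containing round annuli of diverging moduli, and $f^k\to\infty$ there) is Baker's theorem only for \emph{entire} functions; the citation \cite{baker} is to Baker's example paper, and the extension to meromorphic functions with finitely many poles --- which is the setting of the lemma --- is itself part of what Zheng proves, so invoking it wholesale is close to assuming the hardest part of \cite{zheng}. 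With those three points repaired, your outline does yield the lemma.
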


\section{Proof of the theorems}
Before the proof of each of our result, we give some lemmas which will be useful in proving our results.\\

\textbf{Proof of Theorem 3.}

Let $\alpha <\beta$ be such that $\beta-\alpha<2\pi$, $r > 0$ and $\overline{S}$ denote the closure of $S$. Denote
$$S(\alpha , \beta )= \{z: \alpha < argz <\beta \}$$, 
$$S(\alpha,\beta, r) = \{z : \alpha < arg z < \beta\} \cap \{z : |z| < r\}.$$
 Let $f$ be an entire function of order $\rho(f) \in (0,\infty)$. For simplicity, set $\rho = \rho(f)$ and $S = S(\alpha ,\beta)$. Then $f$ is said to blow up exponentially in $\overline{S}$ if for any $\theta\in (\alpha , \beta)$
$$\lim\limits_{r\to\infty}\frac{\log\log |f(re^{\iota\theta})|}{\log r}=\rho$$ and decays to zero exponentially in $\overline{S}$ if for any  $\theta\in (\alpha , \beta )$
$$\lim\limits_{r\to\infty}\frac{\log\log |f(re^{\iota\theta})|^{-1}}{\log r}=\rho$$
\begin{lemma}\cite{hille}\label{hille}
Let $f$ be a non-trivial solution of $f'' + P(z)f = 0$, where $P(z) =
a_nz^n + \cdots + a_0$, $a_n \neq 0$. Set $\theta_j = \frac{2j\pi-arg(a_n)}{n+2}$ and $S_j = (\theta_j, \theta_{j+1})$, where $j = 0, 1, 2, \cdots , n + 1$ and $\theta_{n+2} = \theta_0 + 2\pi$. Then $f$ has the following properties:
\begin{enumerate}[(i)]
\item
In each sector $S_j$ , $f$ either blows up or decays to zero exponentially.
\item 
If, for some $j$, $f$ decays to zero in $S_j$ , then it must blow up in $S_{j-1}$ and $S_{j+1}$. However, it is possible for $f$ to blow up in many adjacent sectors.
\item
If f decays to zero in $S_j$ , then $f$ has at most finitely many zeros in any
closed sub-sector within $S_{j-1}\cup  S_j \cup S_{j+1}$.
\item
If $f$ blows up in $S_{j-1}$ and $S_j$, then for each $\epsilon > 0$, $f$ has finitely many zeros in each sector $\overline{S}(\theta_j - \epsilon, \theta_j + \epsilon)$, and furthermore, as $r \to \infty$,
$$n(\overline{S}(\theta_j - \epsilon, \theta_j + \epsilon, r), 0, f) = (1 + o(1))\frac{2\sqrt{|a_n|}}{\pi (n+2)}r^{\frac{n+2}{2}},$$
where $n(\overline{S}(\theta_j - \epsilon, \theta_j + \epsilon, r), 0, f)$ is the number of zeros of $f$ in the region $\overline{S}(\theta_j - \epsilon, \theta_j + \epsilon,r)$.

\end{enumerate}
\end{lemma}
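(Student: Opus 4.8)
The plan is to establish the lemma by the classical asymptotic (Liouville--Green / WKB) integration of $w''+P(z)w=0$ near infinity, because all four conclusions are really statements about the two exponential branches of the solution space. I would first fix a branch of $\sqrt{-P(z)}$ in a neighbourhood of infinity and perform the Liouville transformation $v=(-P)^{1/4}w$, $\zeta=\int_{z_0}^{z}\sqrt{-P(t)}\,dt$, turning the equation into $v''=(1+E(\zeta))v$ with an error term $E$ that is integrable along rays tending to infinity. Since $P(z)=a_nz^n+\cdots$, one has $\zeta=(1+o(1))\frac{2\sqrt{-a_n}}{n+2}z^{(n+2)/2}$, so there are two fundamental solutions
$$w_{\pm}(z)=(1+o(1))\,(-P(z))^{-1/4}\exp\!\left(\pm\frac{2\sqrt{-a_n}}{n+2}z^{(n+2)/2}\right),$$
and in particular every nontrivial solution has order $\rho=\frac{n+2}{2}$, the value appearing in the definitions of blow-up and decay.

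The technical core is to justify these asymptotics with error estimates that are uniform on closed sectors avoiding the critical rays; I would realise $w_\pm$ as solutions of a Volterra integral equation and bound the Neumann series, the decisive input being $\int|E|\,|d\zeta|<\infty$ along each admissible ray. The critical rays are the loci $\operatorname{Re}\zeta=0$: writing $z=re^{\iota\theta}$ gives $\operatorname{Re}\zeta$ proportional to $r^{(n+2)/2}\cos\!\big(\tfrac{(n+2)\theta}{2}+c\big)$, whose zeros are exactly the rays $\theta_j=\frac{2j\pi-\arg(a_n)}{n+2}$ bounding the sectors $S_j$. On the interior of each $S_j$ the cosine keeps a fixed sign, so one of $w_\pm$ is exponentially dominant and the other recessive; any nontrivial solution is therefore either dominant (blows up) or, in the exceptional case that its dominant coefficient vanishes, recessive (decays to zero), and in either case $\log\log|f(re^{\iota\theta})|/\log r\to\rho$ follows from the explicit exponent. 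This gives (i).

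Conclusion (ii) is the Stokes phenomenon read off from the connection relations between neighbouring sectorial bases: a solution that is recessive (hence decaying) in $S_j$ cannot remain recessive on crossing a critical ray, so it is dominant, i.e.\ blows up, in $S_{j-1}$ and $S_{j+1}$. For the zero counts, in any closed subsector where one exponential strictly dominates the solution equals $(1+o(1))$ times a nonvanishing factor $(-P)^{-1/4}e^{\pm\zeta}$, so it has at most finitely many zeros there, which is (iii). Near a ray $\theta_j$ separating two blow-up sectors both branches are comparable and $f\approx(-P)^{-1/4}\big(c_1e^{\zeta}+c_2e^{-\zeta}\big)$; its zeros lie where $e^{2\zeta}$ equals a fixed constant, i.e.\ on a lattice of spacing $\pi$ in $\operatorname{Im}\zeta$. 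Since $\operatorname{Im}\zeta=(1+o(1))\frac{2\sqrt{|a_n|}}{n+2}r^{(n+2)/2}$ along that ray, dividing the range of $\operatorname{Im}\zeta$ by the spacing $\pi$ yields
$$n\big(\overline{S}(\theta_j-\epsilon,\theta_j+\epsilon,r),0,f\big)=(1+o(1))\frac{2\sqrt{|a_n|}}{\pi(n+2)}\,r^{(n+2)/2},$$
which is (iv); an application of the argument principle on the boundary of the thin sector makes the count rigorous.

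The main obstacle I expect is the second step: proving the sectorial asymptotics of $w_\pm$ with genuinely uniform error control and, inseparably from it, keeping correct account of the Stokes multipliers that drive (ii) and that decide when a given solution is recessive. Once the two-term expansion and its sectors of validity are secured, conclusions (i), (iii) and (iv) reduce to computations with the explicit exponent $\frac{2\sqrt{-a_n}}{n+2}z^{(n+2)/2}$, whereas the connection/Stokes analysis behind (ii) is where the genuine difficulty lies.
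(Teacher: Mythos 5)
The paper offers no proof of this lemma at all---it is quoted verbatim from Hille's book \cite{hille}---and your sketch follows exactly the route of that source: Liouville--Green asymptotic integration of $w''+Pw=0$, identification of the critical rays $\theta_j$ with the zeros of $\mathrm{Re}\int\sqrt{-P}$, dominance/recessiveness of the two exponential branches for (i) and (iii), the Stokes connection relations for (ii), and the lattice of zeros of $c_1e^{\zeta}+c_2e^{-\zeta}$ spaced by $\pi$ in $\mathrm{Im}\,\zeta$ for the count in (iv). Your outline is correct and is the standard proof, though as you yourself note the genuinely hard analytic content (uniform error control for the Volterra iteration on closed subsectors, the extended validity of the recessive solution's asymptotics across the bounding rays needed for (iii), and the connection coefficients driving (ii)) is identified rather than carried out.
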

\begin{lemma}\cite{gundersen2}\label{gundersen2}
Let $A(z)$ and $B(z)\not\equiv 0$  be two entire functions such that for real
constants $\alpha >0$, $\beta >0$, $\theta_1$, $\theta_2$, where $\alpha > 0$, $\beta > 0$ and $\theta_1 < \theta_2$, we have
\begin{equation}
|A(z)|\geq \exp \{(1+o(1)\alpha |z|^{\beta})\},
\end{equation}
\begin{equation}
|B(z)|\leq \exp \{(1+o(1)|z|^{\beta})\}
\end{equation}
as $z\to\infty$ in $\overline{S}(\theta_1, \theta_2) = \{z: \theta_1 \leq arg z \leq \theta_2\}$. Let $\epsilon > 0$ be a given small constant, and let $\overline{S}(\theta_1 + \epsilon, \theta_2 - \epsilon) = \{z: \theta_1+\epsilon \leq arg z \leq \theta_2\ -\epsilon\}$.
If $f$ is a non-trivial solution of \eqref{cde1} with $\rho(f) < \infty$, then the following conclusions hold:
\begin{enumerate}[(i)]
\item
There exists a constant $b(\neq 0)$ such that $f(z)\to b$ as $z\to \infty$ in $\overline{S}(\theta_1 + \epsilon, \theta_2 - \epsilon)$. Furthermore,
\begin{equation}
|f(z) - b| \leq exp\{-(1 + o(1))\alpha |z|^\beta \}
\end{equation}
as $z \to\infty$ in $\overline{S}(\theta_1 + \epsilon, \theta_2 - \epsilon)$.
\item
For each integer $k > 1$,
\begin{equation}
|f^{(k)}(z)| \leq exp\{-(1 + o(1))\alpha |z|^\beta \}
\end{equation}
as $z \to\infty$ in $\overline{S}(\theta_1 + \epsilon, \theta_2 - \epsilon)$.
\end{enumerate}
\end{lemma}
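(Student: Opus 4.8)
The plan is to reduce everything to the behaviour of the logarithmic derivative $w = f'/f$, which satisfies the Riccati equation obtained by dividing \eqref{cde1} by $f$, namely $w' + w^2 + A w + B = 0$, so that
$$w = -\frac{w' + w^2 + B}{A}$$
wherever $A \neq 0$. Since $f$ has finite order $\rho(f) < \infty$, Lemma \ref{gundersen} supplies polynomial bounds on the logarithmic derivatives: for a suitable $\epsilon > 0$ one has $|f'/f| \leq |z|^{\rho + \epsilon}$ and $|f''/f'| \leq |z|^{\rho + \epsilon}$ (hence $|f''/f| \leq |z|^{2(\rho+\epsilon)}$ and $|w'| = |f''/f - (f'/f)^2| \leq 2|z|^{2(\rho + \epsilon)}$) for all $z$ outside an exceptional set of finite measure. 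Thus the numerator $w' + w^2 + B$ is at most a polynomial plus $|B|$, while the denominator is at least $\exp\{(1+o(1))\alpha|z|^\beta\}$; since $A$ strictly dominates $B$ on $\overline{S}(\theta_1,\theta_2)$, the quotient is governed by $|B|/|A|$ and I expect to obtain
$$\left|\frac{f'(z)}{f(z)}\right| \leq \exp\{-(1+o(1))\alpha|z|^\beta\}$$
on the nested sector $\overline{S}(\theta_1 + \epsilon, \theta_2 - \epsilon)$, i.e. the logarithmic derivative is exponentially small.

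Given this, conclusion (i) follows by integration along rays. On a ray $\arg z = \theta$ with $\theta_1 + \epsilon \le \theta \le \theta_2 - \epsilon$, the bound on $f'/f$ already forces $f$ to be zero-free for large $|z|$, so a single-valued branch of $\log f$ exists there and $f(z) = f(z_0)\exp\{\int_{z_0}^{z} w\,d\zeta\}$. Because $w$ is exponentially small the integral converges as $z \to \infty$ along the ray, so $f(z) \to b_\theta := f(z_0)\exp\{\int_{z_0}^{\infty} w\,d\zeta\}$, and the tail estimate of the integral yields $|f(z) - b_\theta| \le \exp\{-(1+o(1))\alpha|z|^\beta\}$. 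Since $f \not\equiv 0$ it cannot vanish identically on the ray, so $z_0$ may be chosen with $f(z_0) \ne 0$; as the exponential factor never vanishes this gives $b_\theta \ne 0$. Finally the limit is independent of $\theta$, since the convergence is uniform on the closed nested sector and $f$ is continuous on a connected set, so a single value $b \ne 0$ results, establishing conclusion (i).

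For conclusion (ii) I would avoid differentiating the equation directly (the hypotheses bound $|A|$ only from below, so $A f'$ need not be controllable) and instead pass the decay of $f-b$ through Cauchy's integral formula. Since $b$ is constant, for $z$ in a slightly smaller sector $\overline{S}(\theta_1 + \epsilon', \theta_2 - \epsilon')$ with $\epsilon' > \epsilon$ one may write
$$f^{(k)}(z) = \frac{k!}{2\pi \iota}\oint_{|\zeta - z| = r_0} \frac{f(\zeta) - b}{(\zeta - z)^{k+1}}\,d\zeta$$
over a circle that remains inside $\overline{S}(\theta_1 + \epsilon, \theta_2 - \epsilon)$; bounding the numerator by the estimate from (i) and keeping $r_0$ bounded below then transfers the decay to each derivative, giving $|f^{(k)}(z)| \le \exp\{-(1+o(1))\alpha|z|^\beta\}$ for every $k \ge 1$, which is conclusion (ii).

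The main obstacle is the passage from the ray-wise, off-exceptional-set estimates of Lemma \ref{gundersen} to a bound uniform enough on the whole nested sector to be integrated: the exceptional set must be shown negligible for the radial integration (it has finite measure, so convergence of $\int w$ and the tail bound survive), and the estimate must be promoted from a full-measure family of rays to the closed sector by continuity. A secondary point to watch is the genuine dominance of $A$ over $B$: the quotient $B/A$ is the term that ultimately fixes the decay rate, so the bound on $|B|$ must be strictly weaker in exponential type than that on $|A|$ in order for this quotient to be of order $\exp\{-(1+o(1))\alpha|z|^\beta\}$ and thereby pin down the exponent $\alpha$ appearing in both conclusions.
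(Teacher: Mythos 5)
The paper does not actually prove this lemma --- it is imported verbatim from Gundersen \cite{gundersen2} and used as a black box --- so your attempt can only be measured against the original argument. Your overall strategy (solve the equation for the dominant coefficient $A$ to show $w=f'/f$ is exponentially small, integrate along rays to produce the limit $b\ne 0$, then recover the derivative estimates from Cauchy's integral formula rather than by differentiating the equation) is sound and is essentially the standard route; your remark that the printed hypothesis on $B$ must really read $|B(z)|\le\exp\{o(1)|z|^{\beta}\}$ for the constant $\alpha$ to survive into the conclusions is also correct --- the parenthesization in the statement is a typo, as one sees from how the lemma is applied in the proof of Theorem 3.

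The genuine gap is the step you flag yourself and then wave away: passing from almost every ray to the closed subsector. Two points. First, for integration along a fixed ray you need part (i) of Lemma \ref{gundersen} (the bound holds for all large $|z|$ on every ray $\arg z=\psi_0$ outside an angular set of linear measure zero), not the radial versions (ii)--(iii) you invoke: a radial exceptional set of finite logarithmic measure is in general unbounded, and on a ray meeting it infinitely often the convergence of $\int w\,d\zeta$ and the tail estimate do not follow from what you have written. Second, even after the ray-wise argument you only know $f(re^{\iota\theta})\to b_\theta\ne 0$ with the stated rate for $\theta$ in a set of full measure, and ``continuity of $f$ on a connected set'' cannot upgrade this: continuity gives no rate, no control on the exceptional rays, and no uniformity, and without a bound valid on full disks your Cauchy-formula step for (ii) cannot even begin. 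The missing ingredient is the Phragm\'en--Lindel\"of principle: subdivide $\overline{S}(\theta_1+\epsilon,\theta_2-\epsilon)$ by finitely many good rays into subsectors of sufficiently small opening (permissible since $f$ and $f'$ have finite order); $f$ is bounded on the bounding rays, hence bounded in each subsector, and applying the maximum principle to auxiliary functions of the form $f'(z)e^{cz^{\beta}}$ (or $(f(z)-b_\theta)e^{cz^{\beta}}$) propagates the exponential decay from the boundary rays to the whole closed subsector, which simultaneously forces the constants $b_\theta$ to agree. Until that step is supplied, neither (i) nor (ii) is established on all of $\overline{S}(\theta_1+\epsilon,\theta_2-\epsilon)$.
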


\begin{proof}
If $\rho(A) <\rho(B)$, then it is already proved in \cite{gundersen2} that all non-trivial solutions of \eqref{cde1} are of infinite order. Now, suppose that $\rho(A) >\rho(B)$.\\
Assume $\rho(f)<\infty$.
Set $\theta_j = \frac{2j\pi-arg(a_n)}{n+2}$ and $S_j = (\theta_j, \theta_{j+1})$, where $j = 0, 1, 2, \cdots , n + 1$ and $\theta_{n+2} = \theta_0 + 2\pi$.
Let $\theta\in S_j$. Since $A(re^{\iota\theta})$ is a solution of equation \eqref{cde2}, thus $A(z)$ either blows up or decays to zero exponentially in each sector $S_j$.\\
\textbf{Case 1:}
Suppose $A(z)$ blows up exponentially in each sector $S_j$. Then, we have $$\lim\limits_{r\to\infty}\frac{\log\log |A(re^{\iota\theta})|}{\log r}=\frac{n+2}{2}.$$
Then for any given constant $\epsilon \in (0, \frac{\pi}{\rho(A)})$ and $\beta \in (0,\frac{\rho(A) - \rho(B)}{3})$, we have
\begin{align*}
 |A(z)| & \geq \exp\{|z|^{\frac{n+2}{2} -\beta }\}\\
& \geq \exp\{\frac{1}{2}|z|^{\frac{n+2}{2} -\beta}+\frac{1}{2}|z|^{\frac{n+2}{2} -\beta}\}\\
& \geq \exp\{\frac{1}{2}|z|^{\frac{n+2}{2} -\beta}+\frac{1}{2}|z|^{\frac{n+2}{2} -2\beta}\}\\
& \geq \exp\{(1+|z|^{-\beta})\frac{1}{2}|z|^{\frac{n+2}{2} -\beta}\}\\
& \geq \exp\{(1+o(1))\alpha |z|^{\frac{n+2}{2} -\beta}\}
 \end{align*}

where $\alpha = \frac{1}{2}.$
$$|B(z)|\leq \exp\{|z|^{\rho(B) + \beta}\} \leq \exp\{|z|^{\rho(A) -2\beta}\} \leq \exp\{o(1)|z|^{\frac{n+2}{2} - \beta}\}$$
as $z\to\infty$ in $S_j(\epsilon) = \{z: \theta _j + \epsilon < arg.z < \theta_j - \epsilon\}$, $j = 0, 1,\cdots ,  n + 1$. 
 Combining above inequalities for $A(z)$, $B(z)$ and Lemma \ref{gundersen2}, exist corresponding constants $b_j \neq 0$ such that
$$|f(z) - b_j| \leq exp\{-(1 + o(1))\alpha|z|^{\frac{n+2}{2}-\beta}\}$$
as $z\to\infty$ in $S_j(2\epsilon)$, $j = 0, 1,\cdots, n + 1$. Therefore, $f$ is bounded in the whole complex plane by the Phragmen-Lindelof principle. So $f$ is a nonzero constant in the whole complex plane by Liouville's theorem. But $f$ cannot be non-constant, which gives rise to a contradiction.  
\\
\textbf{Case 2:}
Suppose $A(z)$ decays to zero in at least one sector $S_j$. Then, we have $$\lim\limits_{r\to\infty}\frac{\log\log |A(re^{\iota\theta})|^{-1}}{\log r}=\frac{n+2}{2}.$$ 
Then we get 
$$|A(re^{\iota\theta})| < exp (-r^{\frac{n+2}{2}-\xi})$$ where $r\to\infty$ and $\xi$ is a positive constant. Since $\rho(f)<\infty$
 by Lemma \ref{gundersen}, we have 
$$\left|\frac{f^{(k)}(z)}{f(z)}\right|\leq |z|^{k\rho(f)}$$
for $|z|\geq R=R(\theta)>0$ and $\theta\in[0,2\pi)/G$, where $G$ is a set with linear measure $0$.\\
Since we have $$T(r,B)\sim \log M(r,B)$$ in a set $E$ having positive upper logarithmic density, by using Lemma \ref{kwonkim}, for $0<c<1/4$ we have 
\begin{equation}\label{eqself}
M(r,B)^{1-2c}<|B(z)|
\end{equation}
where $r\in E\cap F$ and $\theta\in[0,2\pi]\setminus I_r$ where $I_r$ and $F$ as defined in Lemma \ref{kwonkim}.\\
From equation 1 we get
$$|B(z)|\leq\left|\frac{f''(re^{\iota\theta})}{f(re^{\iota\theta})}\right|+|A(z)|\left|\frac{f'(re^{\iota\theta})}{f(re^{\iota\theta})}\right|$$
$$M(r,B)^{1-2c}<|B(z)|\leq (1+o(1))r^{2\rho(f)}$$
$$M(r,B)<(1+o(1))r^{4\rho(f)}$$
for $r>R(\theta)$; $r\in  E\cap F \cap G $ and $\theta\in S_j\setminus I_r$.\\
But $M(r,B)<(1+o(1))r^{4\rho(f)}$ is not possible for transcendental entire function $B(z)$.\\
Hence $\rho(f)$ is infinite.

\end{proof}
{\bf{Proof of Theorem 5.}}\\
Following lemma provides the information of the minimum modulus of entire function of non-integral order having zeros in definite sectors.
\begin{lemma}\cite{kwon}\label{kwon}
Let $f(z)$ be an entire function of finite non-integral order $\rho$ and of genus $p>1$. Suppose that for any given $\epsilon>0$, all the zeros of $f(z)$ have their arguments in the following subset of real numbers:
$$S(p,\epsilon)=\displaystyle{\{\theta:|\theta|\leq\frac{\pi}{2(p+1)}-\epsilon\}}$$ if $p$ is odd, and
$$S(p,\epsilon)=\displaystyle{\{\theta:\frac{\pi}{2p}+\epsilon\leq|\theta|\leq\frac{3\pi}{2(p+1)}-\epsilon\}}$$ if $p$ is even. Then for any $c>1$, there exists a real number $R>0$ such that $$|f(-r)|\leq\exp(-cr^p)$$
for all $r\geq R$.
\end{lemma}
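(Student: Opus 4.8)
The plan is to run everything through the Hadamard factorization of $f$. Since $\rho$ is non-integral, the genus of the canonical product equals $p=\lfloor\rho\rfloor$ and the exponent polynomial has degree at most $p$, so we may write
$$f(z)=z^{m}e^{Q(z)}\prod_{n}E_p\!\left(\frac{z}{a_n}\right),\qquad \deg Q\le p,$$
where $E_p(w)=(1-w)\exp\bigl(w+\tfrac{w^{2}}{2}+\cdots+\tfrac{w^{p}}{p}\bigr)$ and $a_n=t_ne^{\iota\phi_n}$ are the zeros, every $\phi_n$ lying in the prescribed sector $S(p,\epsilon)$. Evaluating at $z=-r$ and taking logarithms gives
$$\log|f(-r)|=m\log r+\operatorname{Re}Q(-r)+\sum_{n}\log\Bigl|E_p\bigl(-\tfrac{r}{a_n}\bigr)\Bigr|,$$
so the whole problem reduces to controlling the sign and size of the terms $\log|E_p(-r/a_n)|$, in which $w_n:=-r/a_n$ has argument $\pi-\phi_n$.

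The key step is a sign computation that explains the exact sector bounds. Writing $\log|E_p(w)|=\log|1-w|+\sum_{k=1}^{p}\tfrac1k\operatorname{Re}(w^{k})$, the leading term for large $|w|$ is $\tfrac1p\operatorname{Re}(w^{p})$, while for small $|w|$ the Taylor expansion gives $\log|E_p(w)|=-\tfrac{1}{p+1}\operatorname{Re}(w^{p+1})+O(|w|^{p+2})$. With $\arg w_n=\pi-\phi_n$ one finds $\operatorname{Re}(w_n^{p})=(-1)^{p}|w_n|^{p}\cos(p\phi_n)$ and $\operatorname{Re}(w_n^{p+1})=(-1)^{p+1}|w_n|^{p+1}\cos((p+1)\phi_n)$. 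Demanding that the first be negative forces $|\phi_n|<\tfrac{\pi}{2p}$ (for $p$ odd) or $\tfrac{\pi}{2p}<|\phi_n|<\tfrac{3\pi}{2p}$ (for $p$ even); demanding that the second give a decaying contribution forces $|\phi_n|<\tfrac{\pi}{2(p+1)}$ (for $p$ odd) or $\tfrac{\pi}{2(p+1)}<|\phi_n|<\tfrac{3\pi}{2(p+1)}$ (for $p$ even). The intersection of these two conditions is exactly the stated sector, and the constant $\epsilon$ gives the uniform margin $\operatorname{Re}(w_n^{p})\le-\delta|w_n|^{p}$, $\operatorname{Re}(w_n^{p+1})\ge\delta|w_n|^{p+1}$ for some $\delta=\delta(\epsilon)>0$; thus both governing terms carry the decaying sign.

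Next I would split the zeros by magnitude. For the near zeros $|a_n|\le\eta r$ (with $\eta$ a small fixed constant) the ratio $|w_n|=r/|a_n|\ge 1/\eta$ is large, the $\operatorname{Re}(w_n^{p})$ estimate dominates, and one obtains
$$\sum_{|a_n|\le\eta r}\log\Bigl|E_p\bigl(-\tfrac{r}{a_n}\bigr)\Bigr|\ \le\ -\frac{\delta}{2p}\,r^{p}\!\!\sum_{|a_n|\le\eta r}|a_n|^{-p}.$$
Because $\rho$ is non-integral with $p<\rho$, the exponent of convergence of $\{a_n\}$ equals $\rho$, so $\sum_n|a_n|^{-p}$ diverges and $\sum_{|a_n|\le\eta r}|a_n|^{-p}$ increases to $\infty$. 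The far zeros add only further non-positive terms via the $w^{p+1}$ sign, and $\operatorname{Re}Q(-r)=O(r^{p})$ since $\deg Q\le p$. Combining, for any prescribed $c>1$ one chooses $R$ so large that $\tfrac{\delta}{2p}\sum_{|a_n|\le\eta r}|a_n|^{-p}$ exceeds $c$ plus the bounded quantity $\operatorname{Re}Q(-r)/r^{p}$ for all $r\ge R$, giving $\log|f(-r)|\le-cr^{p}$. Note that phrasing the conclusion with $r^{p}$ and arbitrary $c$, rather than with the sharp rate $r^{\rho}$, is precisely what lets this monotonicity argument sidestep any irregularity in the distribution of the $a_n$.

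The step I expect to be the main obstacle is obtaining a uniform, correctly signed bound on $\log|E_p(w_n)|$ for all zeros simultaneously, and in particular controlling the transition range $|a_n|\approx r$, where $|w_n|$ is of order one and neither asymptotic regime is cleanly in force yet the number of such zeros may be as large as $r^{\rho}$. Handling this requires a uniform estimate for $\log|E_p(w)|$ valid throughout the closed reflected sector $\{\,\arg w=\pi-\phi:\phi\in S(p,\epsilon)\,\}$, ideally the clean inequality $|E_p(w)|\le 1$ there; along each ray of that sector one checks directly that $\log|E_p|$ vanishes at the origin and that its two leading terms carry the decaying sign, while the hypothesis keeps the zeros bounded away from the ray $\arg w=0$ on which $E_p$ vanishes, so no factor is singular. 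Once this uniform control is secured, the summation and the comparison with $r^{p}$ are routine.
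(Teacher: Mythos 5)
This lemma is not proved in the paper at all --- it is imported verbatim from Kwon's paper \cite{kwon} --- so there is no internal proof to compare against and your attempt has to be judged on its own. The strategy you chose is the standard one (and essentially Kwon's): Hadamard factorization with genus $p=\lfloor\rho\rfloor$, the sign analysis of $\operatorname{Re}(w^{p})$ and $\operatorname{Re}(w^{p+1})$ for $w=-r/a_n$, which does reproduce exactly the stated sectors for $p$ odd and $p$ even, the divergence of $\sum_n|a_n|^{-p}$ coming from the non-integrality of $\rho$ (so the exponent of convergence is $\rho>p$), and the absorption of $\operatorname{Re}Q(-r)=O(r^{p})$; the reduction of ``arbitrary $c$ against $r^{p}$'' to the divergence of the partial sums is also correct.

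The genuine gap is the one you flag yourself and then do not close: a pointwise, correctly signed bound on $\log|E_p(w)|$ valid on the \emph{whole} reflected sector, in particular for $|w|\asymp 1$. Knowing that the leading asymptotic term at $0$ and the leading asymptotic term at $\infty$ are both negative does not exclude $\log|E_p(w)|>0$ at intermediate radii, and since as many as $\sim r^{\rho}$ zeros satisfy $|a_n|\asymp r$, an uncontrolled positive contribution there would destroy the estimate; ``one checks directly along each ray'' is not an argument. The missing ingredient is the identity $E_p'(w)/E_p(w)=-w^{p}/(1-w)$, which after integration along the ray $w=te^{i\theta}$ gives
$$\log\bigl|E_p(te^{i\theta})\bigr|=-\int_0^t s^{p}\,\frac{\cos((p+1)\theta)-s\cos(p\theta)}{\bigl|1-se^{i\theta}\bigr|^{2}}\,ds.$$
For $\theta=\pi-\phi$ with $\phi\in S(p,\epsilon)$, your two sign conditions say precisely that $\cos((p+1)\theta)\ge\delta>0$ and $-\cos(p\theta)\ge\delta>0$ with $\delta=\delta(\epsilon)$, so the numerator is at least $\delta(1+s)$ for every $s>0$ and one obtains the uniform bound $\log|E_p(w)|\le-\delta'\,|w|^{p+1}/(1+|w|)$ on the entire closed sector. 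That single inequality subsumes your near/far/transition case split (it gives $\lesssim-|w|^{p}$ for large $|w|$ and $\lesssim-|w|^{p+1}$ for small $|w|$), and with it the rest of your argument goes through as written.
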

\begin{proof}
Let us suppose that solution $f$ of \eqref{cde1} is of finite order.
Using Lemma \ref{gundersen}(b), we have 
\begin{equation}\label{eqgundersen}
\left|\frac{f^{(k)}(re^{\iota \theta})}{f(re^{\iota \theta})}\right| \leq r^{k\rho(f)} 
\end{equation}
for $r\not\in E\cup [0,1]$, where $E$ is a set with finite logarithmic measure  and $r>R_1(\theta)$. Let us rotate the axes of the complex plane, assume that all the zeros of $A(z)$ have their arguments in the set $S(p,\epsilon)$ defined in Lemma \ref{kwon} for some $\epsilon>0$. Hence by Lemma \ref{kwon}, there exists a positive real number $R_2$ such that for all $r>R_2$, we have
\begin{equation}\label{eqkwon}
\min_{|z|=r}|A(z)|\leq|A(-r)|\leq\exp(-cr^p)< 1.
\end{equation}
 By Lemma \ref{zheng}, we have
\begin{equation}\label{eqzheng}
M(r,B)^d\leq |B(re^{\iota\theta})|
\end{equation}
for $0<d<1$ and $r\in G=\cup_{n=1}^\infty\{r:r_n<r<R_n\}$.
From equation \eqref{cde1} we get,
$$|B(z)|\leq\left|\frac{f''(re^{\iota\theta})}{f(re^{\iota\theta})}\right|+|A(z)|\left|\frac{f'(re^{\iota\theta})}{f(re^{\iota\theta})}\right|$$
Using \eqref{eqgundersen}, \eqref{eqkwon} and \eqref{eqzheng} for $r>max\{R_1(\theta),R_2\}$ such that $r\in G\setminus E_1\cup[0,1]$ and $\theta\in\{\theta:\min_{|z|=r}|A(z)|=|A(z)|\}$, we have
$$M(r,B)^{d}<|B(z)|\leq (1+o(1))r^{2\rho(f)}$$
$$M(r,B)<(1+o(1))r^{2\rho(f)}$$
which is a contradiction for a transcendental entire function.

\end{proof}   
\textbf{Proof of Theorem 6.}

Combining Theorem 2 from \cite{fuchs} and Lemma 2.2 from \cite{ishizaki}, we get
\begin{lemma}\label{fi}
Let $g(z) =\sum\limits_{n=0}^\infty a_{\lambda_n}z^{\lambda_n}$ be an entire function of finite order with Fabry gap, and let $u(z)$ be an entire function with $\rho(u)  \in (0,\infty)$. Then for any given $\epsilon \in (0, \varsigma)$, where $\varsigma=min(1,\rho(u))$, there exists a set $K \subset (1,\infty)$ satisfying $\overline{\log dense}K \geq \xi$, where $\xi \in (0, 1)$ is a constant such that for all $|z| = r \in K$, $$\log M(r, u) > r^{\rho(u) -\epsilon},\   \log m(r, g) > (1-\xi) \log M(r, g),$$ where $M(r, u) = max\lbrace|u(z)| : |z| = r\rbrace ,\ m(r, g) = min\lbrace|g(z)| : |z| = r\rbrace$ and $M(r, g) = max\{ |g(z)| : |z| = r\}$.
\end{lemma}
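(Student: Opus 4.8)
The plan is to read off the two stated lower bounds from the two cited results separately, each valid on its own family of radii, and then to intersect those families while keeping the upper logarithmic density positive. First I would apply Lemma~2.2 of \cite{ishizaki} to the entire function $u$. Since $0<\rho(u)<\infty$ and the restriction $\epsilon\in(0,\varsigma)$ with $\varsigma=\min(1,\rho(u))$ keeps $\rho(u)-\epsilon$ strictly between $0$ and $\rho(u)$, that lemma supplies a constant $\xi\in(0,1)$ and a set $K_1\subset(1,\infty)$ with $\overline{\log dens}\,K_1\geq\xi$ on which
$$\log M(r,u)>r^{\rho(u)-\epsilon}.$$
This is the quantitative form of $\rho(u)=\limsup_{r\to\infty}\frac{\log\log M(r,u)}{\log r}$: the radii at which $\log M(r,u)$ exceeds $r^{\rho(u)-\epsilon}$ cannot be thin on the logarithmic scale.

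Next, because $g$ is entire of finite order with Fabry gap, I would invoke Theorem~2 of \cite{fuchs}, the comparison of minimum and maximum modulus for a gap series. Fed with the same tolerance $\xi$, it produces an exceptional set $E\subset(1,\infty)$ of finite logarithmic measure, and hence with $\overline{\log dens}\,E=0$, such that
$$\log m(r,g)>(1-\xi)\log M(r,g)\qquad\text{for every } r\notin E.$$
This is permissible because the gap comparison holds for an arbitrary positive tolerance, so I am free to match it to the density constant coming from \cite{ishizaki}.

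I would then set $K=K_1\setminus E$. Both displayed inequalities hold simultaneously for every $r\in K$, which is the assertion, once one checks that $K$ retains the required density. From the inclusion $K_1\subseteq(K_1\setminus E)\cup E$ and the subadditivity of the upper logarithmic density,
$$\overline{\log dens}\,K_1\leq\overline{\log dens}\,(K_1\setminus E)+\overline{\log dens}\,E=\overline{\log dens}\,K,$$
so that $\overline{\log dens}\,K\geq\overline{\log dens}\,K_1\geq\xi$, exactly as claimed.

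Since the two estimates are quoted, the genuine work — and the only place where the argument could go wrong — is the bookkeeping that fuses them: one must employ a single constant $\xi$ simultaneously as the density floor for $K_1$ and as the loss factor $1-\xi$ in the gap comparison, and one must ensure that deleting the Fabry gap exceptional set $E$ does not destroy the positive density. The first point is arranged by taking $\xi$ no larger than the density furnished by \cite{ishizaki} and then invoking \cite{fuchs} with that value; the second is precisely the subadditivity computation above, which works only because $E$ is negligible on the logarithmic scale. I expect this density accounting, rather than any individual modulus estimate, to be the crux.
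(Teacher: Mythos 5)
Your proposal is correct and is exactly the combination the paper intends: the paper offers no written proof of this lemma beyond the remark that it follows by combining Theorem~2 of \cite{fuchs} with Lemma~2.2 of \cite{ishizaki}, and your argument --- take the positive-upper-logarithmic-density set from \cite{ishizaki} for $u$, remove the density-zero exceptional set from the Fuchs gap estimate for $g$, and use subadditivity of upper logarithmic density to keep $\overline{\log dens}\,K\geq\xi$ --- is precisely that combination with the bookkeeping made explicit.
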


\begin{lemma}\cite{kwon2}\label{kwon2}
Let $f(z)$ be a non-constant entire function. Then there exists a real number $R$ such that for all $r\geq R$ there exists $z_r$ with $|z_r| = r$ satisfying
\begin{equation}
\left|\frac{f(z_r)}{f'(z_r)}\right|\leq r.
\end{equation}
\end{lemma}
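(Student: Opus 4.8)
The plan is to argue by contradiction. Suppose the conclusion fails for some (large) $r$, so that $|f(z)/f'(z)| > r$, equivalently $|f'(z)/f(z)| < 1/r$, for every $z$ with $|z| = r$. In particular this forces $f(z)\neq 0$ on the circle $|z|=r$, so the logarithmic derivative $f'/f$ is continuous there and the argument principle is available.

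First I would bound the number of zeros of $f$ inside the disk. Since $f$ is entire (no poles) and zero-free on the circle, $n(r,0,f) = \frac{1}{2\pi i}\oint_{|z|=r}\frac{f'(z)}{f(z)}\,dz$, and the hypothesis $|f'/f| < 1/r$ on a contour of length $2\pi r$ gives the crude estimate $|n(r,0,f)| < \frac{1}{2\pi}\cdot\frac{1}{r}\cdot 2\pi r = 1$. As the zero-counting function is a non-negative integer, this forces $n(r,0,f) = 0$, i.e. $f$ has no zeros in the closed disk $\overline{D}(0,r)$.

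The key step is then to upgrade the boundary bound to a global one. Because $f$ is zero-free on $\overline{D}(0,r)$, a single-valued branch $F = \log f$ is holomorphic there, with $F' = f'/f$. Since $F'$ is holomorphic, the maximum modulus principle propagates the boundary bound $|F'| < 1/r$ to the entire disk. Integrating $F'$ along the radial segment from $0$ to any boundary point $z$ then yields $|F(z) - F(0)| < \int_0^r (1/r)\,ds = 1$, and taking real parts gives $\log|f(z)| < \log|f(0)| + 1$ for all $|z| = r$. Hence $M(r,f) < e\,|f(0)|$.

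This is where the contradiction appears: $f$ is non-constant, so by Liouville's theorem $M(r,f) \to \infty$, and therefore the bound $M(r,f) < e\,|f(0)|$ cannot persist once $r$ is large (choose $R$ with $M(R,f)\ge e|f(0)|$ and use that $M(\cdot,f)$ is non-decreasing; note also that if $f(0)=0$ the zero-free conclusion already contradicts the hypothesis for every $r$, so that case is immediate). Consequently, for all $r\ge R$ there must exist $z_r$ with $|z_r|=r$ and $|f(z_r)/f'(z_r)|\le r$. I expect the only delicate point to be the passage from the circle to the interior, namely first ruling out interior zeros via the argument principle so that $\log f$ is single-valued; once that is secured, the maximum-modulus propagation and the radial integration are routine.
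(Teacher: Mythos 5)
The paper does not prove this lemma; it is quoted verbatim from Kwon \cite{kwon2}, so there is no in-paper argument to compare against. Your proof is correct and is essentially the standard one for this fact: negating the conclusion on a circle $|z|=r$ gives $|f'/f|<1/r$ there, the argument principle then forces $n(r,0,f)<1$ and hence $f\neq 0$ on $\overline{D}(0,r)$, the maximum modulus principle pushes the bound on $(\log f)'=f'/f$ into the disk, and radial integration yields $M(r,f)<e\,|f(0)|$, which contradicts $M(r,f)\to\infty$ for non-constant entire $f$ once $r$ is large. The only delicate points — that a zero of $f$ on the circle already witnesses the conclusion (so the negation legitimately yields zero-freeness on the boundary, with $f/f'$ read as its removable-singularity value $0$ at a multiple zero), and the separate disposal of the case $f(0)=0$ — are both addressed, so I see no gap.
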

\begin{lemma}\cite{bank}\label{bank}
 Let $A(z) = h(z)e^{P(z)}$ be an entire function with $\lambda(A) < \rho(A) = n$, where $P(z)$ is a polynomial of degree $n$. Then for every $\epsilon > 0$ there exists $E\subset[0, 2\pi)$ of linear measure zero such that
\begin{enumerate}
\item[(i)] for $\theta \subset E^+\setminus E$ there exists $R > 1$ such that
\begin{equation}
|A(re^{\iota\theta})| \geq \exp((1 - \epsilon)\delta(P, \theta)r^n)
\end{equation}
for $r > R$.
\item[(ii)] for $\theta \subset E^- \setminus E$ there exists $R > 1$ such that \begin{equation} |A(re^{\iota\theta})| \leq \exp((1 - \epsilon)\delta(P, \theta)r^n)
\end{equation}for $r > R$.
\end{enumerate}
\end{lemma}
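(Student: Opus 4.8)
Throughout, $\delta(P,\theta)$ denotes the usual indicator of the leading term of $P$: if $P(z)=a_nz^n+\cdots+a_0$ with $a_n=|a_n|e^{\iota\arg a_n}$, then $\delta(P,\theta)=|a_n|\cos(\arg a_n+n\theta)$, and $E^{+}$, $E^{-}$ are the sets of $\theta$ with $\delta(P,\theta)>0$, $\delta(P,\theta)<0$ respectively. The plan is to treat the two factors of $A=he^{P}$ separately: the exponential factor is evaluated exactly, while the factor $h$ carrying the zeros is shown to be negligible on the scale $r^{n}$ along almost every ray. First I would record the contribution of the exponential. Expanding $P$ on the ray $\arg z=\theta$ gives
\[
\log\bigl|e^{P(re^{\iota\theta})}\bigr|=\operatorname{Re}P(re^{\iota\theta})=\delta(P,\theta)\,r^{n}+O(r^{\,n-1})\qquad(r\to\infty),
\]
uniformly in $\theta$. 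The zero set $E_{0}=\{\theta:\delta(P,\theta)=0\}$ is finite, and on each complementary arc $\delta(P,\cdot)$ has constant sign, producing the open sets $E^{+}$ and $E^{-}$.

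Next I would fix the size of $h$. The zeros of $A$ coincide with those of $h$, so $\lambda(h)=\lambda(A)<n$; writing $A=he^{P}$ and applying the Hadamard factorisation, the zero-factor $h$ has order $\rho(h)=\lambda(A)<n$ (a degree-$n$ exponential term, were it present, would merely be absorbed into $P$). Consequently $\log M(r,h)=o(r^{n})$. This already settles the upper estimate (ii): for $\theta\in E^{-}\setminus E$,
\[
\log\bigl|A(re^{\iota\theta})\bigr|\le\log M(r,h)+\operatorname{Re}P(re^{\iota\theta})=\delta(P,\theta)\,r^{n}+o(r^{n}),
\]
and since $\delta(P,\theta)<0$ the error term is swallowed, giving $|A(re^{\iota\theta})|\le\exp\bigl((1-\epsilon)\delta(P,\theta)r^{n}\bigr)$ for all large $r$.

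The crux is the lower estimate (i), for which I need a matching lower bound $\log|h(re^{\iota\theta})|\ge-o(r^{n})$ along the ray. Here I would invoke the classical minimum-modulus estimate for a canonical product $h$ of order $\lambda(A)<n$ (the argument underlying \cite{bank}): for every $\epsilon'>0$ one has $\log|h(z)|\ge-|z|^{\lambda(A)+\epsilon'}$ for all $z$ lying outside a union of discs centred at the zeros of $h$ whose radii have finite sum. Because this sum is finite, the set of directions $\theta$ meeting such discs at arbitrarily large moduli has linear measure zero; I would verify this by noting that a disc of radius $\delta_{k}$ about a zero of modulus $d_{k}$ subtends at the origin an arc of length $O(\delta_{k}/d_{k})$, so the directions meeting any disc of modulus $\ge T$ form a set of measure $O\bigl(\sum_{d_{k}\ge T}\delta_{k}/d_{k}\bigr)\to0$ as $T\to\infty$. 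Let $E$ be this null set together with $E_{0}$. For $\theta\in E^{+}\setminus E$ the ray $re^{\iota\theta}$ eventually avoids all exceptional discs, so $\log|h(re^{\iota\theta})|\ge-r^{\lambda(A)+\epsilon'}=-o(r^{n})$ for $r>R(\theta)$, and hence
\[
\log\bigl|A(re^{\iota\theta})\bigr|\ge\delta(P,\theta)\,r^{n}-o(r^{n})\ge(1-\epsilon)\delta(P,\theta)\,r^{n}
\]
since $\delta(P,\theta)>0$ on $E^{+}$.

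The main obstacle is precisely this last step: obtaining the ray-wise lower bound for $h$ in the presence of its zeros and confirming that the bad directions — those along which the ray keeps returning to neighbourhoods of zeros — form only a set of linear measure zero. The remaining ingredients, namely the exact asymptotics of $\operatorname{Re}P$ and the crude bound $\log M(r,h)=o(r^{n})$, are elementary.
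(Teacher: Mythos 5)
The paper never proves this statement at all: Lemma~\ref{bank} is imported verbatim from Bank--Laine--Langley \cite{bank} and used as a black box in the proof of Theorem~6, so there is no in-paper argument to compare yours against. Your proposal is a correct, self-contained reconstruction of the standard argument behind that citation, and its three ingredients are the right ones: the exact evaluation $\operatorname{Re}P(re^{\iota\theta})=\delta(P,\theta)r^{n}+O(r^{n-1})$, the crude bound $\log M(r,h)=o(r^{n})$ (which indeed settles part (ii) with no exceptional set beyond the finitely many zeros of $\delta(P,\cdot)$), and, for part (i), the minimum-modulus theorem for an entire function of order less than $n$ --- valid outside discs centred at the zeros whose radii $\delta_k$ have finite sum --- combined with the observation that the directions whose rays meet those discs at arbitrarily large moduli form a set of measure at most $O\bigl(T^{-1}\sum_k\delta_k\bigr)\to 0$ as $T\to\infty$. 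That disc-to-arc conversion is precisely the mechanism that produces the exceptional set $E$ of linear measure zero, and your handling of it (finitely many discs of center modulus below $T$, hence a ray-dependent threshold $R(\theta)$) is sound. Two refinements are worth making explicit. First, as transcribed in the paper the lemma is actually false without the additional (intended) normalisation that $h$ carries no degree-$n$ exponential: for example $A=e^{z}$, $h=e^{-z}$, $P=2z$ satisfies the stated hypotheses ($\lambda(A)=0<1=\rho(A)$) yet violates (i) on the whole half-plane of directions with $\cos\theta>0$; your parenthetical remark that such a term ``would merely be absorbed into $P$'' is exactly the needed fix, since absorbing it changes $\delta(P,\theta)$ to the correct indicator. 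Second, after that normalisation you should claim only $\rho(h)<n$ rather than $\rho(h)=\lambda(A)$ --- a residual exponential factor of degree at most $n-1$ can force $\rho(h)>\lambda(A)$ --- and correspondingly the minimum-modulus bound should read $\log|h(z)|\ge -|z|^{\rho(h)+\epsilon'}$ with $\rho(h)+\epsilon'<n$; nothing else in your argument depends on this distinction.
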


\begin{proof}
Suppose $f$ is a solution of finite order of equation (\ref{cde1}). Then by Lemma \ref{gundersen}
 we get 
 \begin{equation}\label{eqgund}
\left|\frac{f^{''}(re^{\iota\theta})}{f'(re^{\iota\theta})}\right| \leq r^{\rho(f)}
\end{equation}
where $\theta\in [0,2\pi)\setminus E_1$, where $E_1$ is a set of linear measure $0$ for $r\geq r_1(\theta)>0$.\\
Using Lemma \ref{kwon2}, we get
\begin{equation}\label{eqkwon2}
\left|\frac{f(z_r)}{f'(z_r)}\right|\leq r
\end{equation}
for $r\geq r_2$ such that $|z_r|=r$.\\
Since $\lambda(B)<\rho(B)$, choosing $\theta\in [0,2\pi)\setminus E_2$, where $E_2$ is a set of linear measure $0$ such that $\delta(P,\theta)<0$ using Lemma \ref{bank}, we have
\begin{equation}\label{eqbank}
|B(re^{\iota\theta})| \leq \exp((1 - \epsilon)\delta(P, \theta)r^n)
\end{equation}
for $r>r_3$. From equation \eqref{cde1}, we get 
\begin{equation}\label{bycde}
|A(re^{\iota\theta})|\leq \left|\frac{f''(re^{\iota\theta})}{f'(re^{\iota\theta})}\right|+|B(re^{\iota\theta})|\left|\frac{f(re^{\iota\theta})}{f'(re^{\iota\theta})}\right|
\end{equation}
\begin{enumerate}[(a)]
\item Since $A(z)$ is a transcendental entire function with Fabry gap, using Lemma \ref{fi} there exist a set $H\subset (1,\infty)$ satisfying $\overline{\log dens}H \geq \xi$, where $\xi \in (0,1)$ for $r\in H$ such that
\begin{equation}\label{eqfi}
M(r,A)^{(1-\xi)}<|A(z)|.
\end{equation}
Using (\ref{eqgund}), (\ref{eqkwon2}), (\ref{eqbank}), (\ref{bycde}) and (\ref{eqfi}) we will get
$$M(r,A)^{(1-\xi)}\leq r^{\rho (f)}+r\exp((1 - \epsilon)\delta(P, \theta)r^n)$$ 
 $r\in H$, $r > max.\{r_1, r_2, r_3\}$ and $\theta\in [0,2\pi)\setminus (E_1 \cup E_2)$.\\
$$M(r,A)^{(1-\xi)}<r^{2\rho (f)}(1+o(1)),$$
which is a contradiction for very large $r$.
\item Using (\ref{eqself}), (\ref{eqgund}), (\ref{eqkwon2}), (\ref{eqbank}) and (\ref{bycde}) we will get
$$M(r,A)^{(1-2c)}<r^{2\rho (f)}(1+o(1)),$$
for $r\in E\cap F$, $r > max.\{r_1, r_2, r_3\}$ and $\theta\in [0,2\pi)\setminus (E_1 \cup E_2\cup I_r)$.\\
which is a contradiction for very large $r$.
\item Using (\ref{eqzheng}), (\ref{eqgund}), (\ref{eqkwon2}), (\ref{eqbank}) and (\ref{bycde}) for , we get for $0<d<1$, $r\in G=\cup_{n=1}^\infty\{r:r_n<r<R_n\}$, $r > max.\{r_1, r_2, r_3\}$ and $\theta\in [0,2\pi)\setminus E_1\cup E_2$.
$$M(r,A)^d<r^{2\rho (f)}(1+o(1)),$$
which is a contradiction for very large $r$.

\end{enumerate}
\end{proof}

\end{document}